\theoremstyle{plain}
\newtheorem{thm}{Theorem}[section]
\newtheorem{lem}[thm]{Lemma}
\newtheorem{prop}[thm]{Proposition}
\def\spl#1#2{\mathord{\mathrm{Sp}_{#1}(#2)}}
\begin{document}

\title[A lower bound for the number of classes]{A lower bound for the number of conjugacy classes of a finite group}

\author{Attila Mar\'oti} \address{Fachbereich
Mathematik, Technische Universit\"{a}t Kaiserslautern, Postfach
3049, 67653 Kaiserslautern, Germany \and Alfr\'ed R\'enyi Institute of
  Mathematics, Re\'altanoda utca 13-15, H-1053, Budapest, Hungary}
\email{maroti@mathematik.uni-kl.de \and maroti.attila@renyi.mta.hu}

\thanks{The research of the author was supported by an Alexander von Humboldt
Fellowship for Experienced Researchers and by OTKA
K84233.}

\begin{abstract}
Every finite group whose order is divisible by a prime $p$ has at least $2 \sqrt{p-1}$ conjugacy classes.  
\end{abstract}
\maketitle

\section{Introduction}

Let $k(G)$ denote the number of conjugacy classes of a finite group $G$. This is also the number of complex irreducible characters of $G$. Bounding $k(G)$ is a fundamental problem in group and representation theory. 

Concerning Problem 3 of the list of 40 problems of R. Brauer \cite{Brauer}, the best known asymptotic general lower bounds for $k(G)$ in terms of the order of $G$ are almost logarithmic and are due to L. Pyber \cite{Pyber} and T. M. Keller \cite{Keller}. In this paper we consider a slightly different point of view in establishing lower bounds for $k(G)$. We wish to give a lower bound for $k(G)$ in terms of a prime divisor $p$ of $|G|$. This is related to Problem 21 in \cite{Brauer}. 

Let $G$ be a finite group such that the order of $G$ contains a prime $p$ with exact exponent $1$. Pyber observed that results of Brauer \cite{Br} imply that $G$ contains at least $2 \sqrt{p-1}$ conjugacy classes. Motivated by this observation Pyber asked various questions concerning lower bounds for $k(G)$ in terms of the prime divisors of $|G|$. In response to these questions (and motivated by trying to find explicit lower bounds for the number of complex irreducible characters in a block) L. H\'ethelyi and B. K\"ulshammer obtained various results \cite{HK}, \cite{HK1} for solvable groups. For example they proved in \cite{HK} that every solvable finite group $G$ whose order is divisible by $p$ has at least $2 \sqrt{p-1}$ conjugacy classes. Later G. Malle \cite[Section 2]{M} showed that if $G$ is a minimal counterexample to the inequality $k(G) \geq 2 \sqrt{p-1}$ with $p$ dividing $|G|$ then $G$ has the form $HV$ where $V$ is an irreducible faithful $H$-module for a finite group $H$ with $(|H|,|V|) = 1$ where $p$ is the prime dividing $|V|$. He also showed that $H$ cannot be an almost quasisimple group. Using these results, Keller \cite{K} showed that there exists a universal constant $C$ so that whenever $p > C$ then $k(G) \geq 2 \sqrt{p-1}$. In a later paper H\'ethelyi, E. Horv\'ath, Keller and A. Mar\'oti \cite{HHKM} proved that by disregarding at most finitely many non-solvable $p$-solvable
groups $G$, we have $k(G) \geq 2 \sqrt{p-1}$ with equality if and only if $\sqrt{p-1}$ is an integer, $G = C_{p} \rtimes
C_{\sqrt{p-1}}$ and $C_{G}(C_{p}) = C_{p}$. However since the constant $C$ in Keller's theorem was unspecified, there had been no quantitative information on what was meant by at most finitely many in the afore-mentioned theorem. 

In this paper we answer this question for all primes $p$.

\begin{thm}
\label{main}
Every finite group $G$ whose order is divisible by a prime $p$ has at least $2\sqrt{p-1}$ conjugacy classes. Equality occurs if and only if $\sqrt{p-1}$ is an integer, $G = C_{p} \rtimes C_{\sqrt{p-1}}$ and $C_{G}(C_{p}) = C_{p}$.
\end{thm}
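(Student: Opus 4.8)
The plan is to exploit the reductions already collected in the introduction so that the problem becomes one about a very specific class of affine-type groups, and then to handle that class by a combination of character-counting and genuine case analysis. So the first move is to reduce to a minimal counterexample $G$: if $k(G) < 2\sqrt{p-1}$ with $p \mid |G|$, then by Malle's result quoted above we may assume $G = HV$, where $V$ is an irreducible faithful $H$-module with $(|H|,|V|)=1$, $p$ is the prime dividing $|V|$, and $H$ is \emph{not} almost quasisimple. Since $V$ is an elementary abelian $p$-group of order $p^d = |V|$ for some $d$, and $k(G) = k(HV)$ can be computed via Clifford theory as the number of $H$-orbits on $V$ plus correction terms, the central quantity is the number of orbits of $H$ acting on the dual module $V^{*}$ (equivalently on $V$, using coprimeness and the fact that Brauer's permutation lemma makes the two counts agree). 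Concretely, $k(HV) = \sum_{v} k(C_H(v))$ where $v$ runs over orbit representatives of $H$ on $V^{*}$, so the basic inequality to beat is $\sum_{v} k(C_H(v)) \geq 2\sqrt{p-1}$.

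The key structural observation is that $p \le |V| = p^{d}$, so it suffices to prove the stronger-looking but cleaner bound $k(HV) \ge 2\sqrt{|V|-1} \ge 2\sqrt{p-1}$, and in fact one wants $k(HV) \ge 2\sqrt{p^{d}-1}$. I would therefore set up an orbit-counting inequality: writing $n$ for the number of $H$-orbits on the nonidentity vectors of $V^{*}$ (so $k(HV) \geq n+1$ at the crudest level, since the zero vector contributes $k(H)$ orbits-worth by itself), one gets $p^{d}-1 = \sum_{i=1}^{n}|{\cal O}_i| \le \sum_{i=1}^{n} |H|/|C_H(v_i)|$. The plan is to play this size estimate against the equally elementary bound $k(C_H(v_i)) \geq$ (number of classes of the point stabilizer), and to use the arithmetic-mean/geometric-mean type trick that underlies the $2\sqrt{p-1}$ shape: if a product of two quantities whose roles are ``number of orbits'' and ``average orbit size'' is at least $p-1$, then their sum is at least $2\sqrt{p-1}$. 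Making the two factors be $k(H)$-weighted orbit counts on $V$ and on $V^{*}$ and invoking that these counts coincide is exactly where Brauer's lemma earns its keep, and where the equality case $G = C_p \rtimes C_{\sqrt{p-1}}$ with $C_G(C_p)=C_p$ gets pinned down, since equality in AM--GM forces both factors equal to $\sqrt{p-1}$ and forces $H$ to act with a single regular orbit of the stated cyclic order.

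The hard part, and where the real work of the paper must lie, is that the elementary orbit-counting bound is \emph{not} by itself strong enough once $H$ is a genuinely nonabelian, non-quasisimple group of small composition length acting on a module over a small field; the coprimeness hypothesis $(|H|,|V|)=1$ together with faithful irreducibility forces $H$ into a restricted list (via Aschbacher-type / Clifford-theoretic reduction of the linear group $H \le \mathrm{GL}(V)$), and for each geometric class in that list one must verify the inequality, with the truly delicate cases being those of small $|V|$ where the slack $p^{d}-1$ versus $p-1$ is thinnest and where $H$ can have few conjugacy classes relative to its order. I would organize this as a reduction of $H$ to being (close to) quasi-simple or of extraspecial-normalizer type and dispatch the remaining small cases by direct computation; I expect the extraspecial/symplectic-type normalizer cases and the sporadically small-dimensional cases to be the genuine obstacle, since there the generic averaging argument is tight and must be supplemented by explicit character-degree or class-number data. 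Throughout, the equality analysis is carried in parallel: one shows that in every branch except the claimed cyclic configuration the inequality is strict, so that equality $k(G)=2\sqrt{p-1}$ indeed forces $\sqrt{p-1}\in\ZZ$, $G=C_p\rtimes C_{\sqrt{p-1}}$ and $C_G(C_p)=C_p$.
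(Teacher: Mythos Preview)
Your outline captures the overall architecture correctly --- reduce to $G = HV$ with $V$ a faithful coprime irreducible $H$-module, invoke the Clifford--Gallagher formula $k(HV) = \sum_i k(C_H(v_i)) \geq k(H) + n(H,V) - 1$, and then run an Aschbacher-style case analysis on $H \leq GL(V)$ --- but there are two genuine gaps that would stop the argument from going through as written.

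First, the proposal to replace the target by the ``cleaner'' bound $k(HV) \geq 2\sqrt{|V|-1}$ is a misstep. The paper does \emph{not} prove this; it proves only $k(H) + n(H,V) - 1 \geq 2\sqrt{p-1}$ where $p$ is the characteristic of the field, and throughout the orbit analysis it compares $n(H,V)$ against $2\sqrt{q-1}$ with $q = |F|$, never against $2\sqrt{|V|-1}$. The gap $q^n$ versus $q$ is precisely what makes the problem tractable: one can afford crude bounds like $n(H,V) \geq |V|/|H|$ and still win because the threshold is only $2\sqrt{q-1}$. Aiming for $2\sqrt{|V|-1}$ would require controlling $|H|$ to within roughly $\sqrt{|V|}$, which the exceptional configurations (e.g.\ the $Z \circ 2.A_5$ case in dimension $2$, or induced modules thereof) simply do not satisfy. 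Your AM--GM heuristic correctly explains the shape of the answer and the equality case in dimension~$1$, but it does not, by itself, survive the passage to higher-dimensional $V$.

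Second, and more seriously, you are missing the step that makes the ``direct computation for small cases'' finite: the paper first uses the published classifications of non-nilpotent groups with at most $14$ conjugacy classes (Vera~L\'opez et al.) to dispose of any counterexample with $k(G) \leq 14$, thereby forcing $p \geq 59$ from the outset. Every subsequent estimate in the paper --- the bound $|G| < |V|^{3/2}$ for groups in the class $\mathcal{C}_q$, Seager's theorem on ranks of primitive solvable groups, Malle's long-orbit theorem for quasisimple groups, the Hiss--Malle tables --- is calibrated to the regime $p \geq 59$, and without that threshold the residual case list has no floor and the computation cannot terminate. Your proposal also does not mention the analogue of Malle's orbit-length result (the paper's Proposition~5.1), which is what actually eliminates the almost-quasisimple possibilities rather than Malle's earlier qualitative statement; nor Seager's bound, which handles the solvable primitive imprimitive-in-disguise cases. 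These are the ingredients that do the heavy lifting in exactly the ``extraspecial/symplectic-type normalizer and sporadically small-dimensional'' cases you flag as the obstacle, and without naming them the plan remains a wish rather than a proof.
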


\section{A reduction}

Let $G$ be a minimal counterexample to the statement of Theorem \ref{main}. By \cite{HK} (and the equality by \cite[Theorem 2.1]{HHKM}) we know that $G$ is not solvable. Also, by \cite[Theorem 3.1]{HHKM}, we may assume that $G$ is a $p$-solvable group (whose order is divisible by $p$). Now we may proceed as in \cite[Page 428]{HHKM}. Let $V$ be a minimal normal subgroup in $G$. If $|G/V|$ is divisible by $p$ then, by the minimality of $G$, we have $k(G) > k(G/V) \geq 2 \sqrt{p-1}$, a contradiction. So $p$ divides $|V|$, and since $G$ is $p$-solvable, we see that $V$ is an elementary abelian $p$-group. By this argument we see that $V$ is the unique minimal normal subgroup of $G$. By the Schur-Zassenhaus theorem, there is a complement $H$ of $V$ in $G$. So $G$ has the form $HV$ where $V$ is a coprime, faithful and irreducible $H$-module. 

In the papers \cite{VL} and \cite{VS} all non-nilpotent finite groups are classified with at most $14$ conjugacy classes. By going through these lists of groups we see that no group $G$ of the form described in the previous paragraph is a counterexample to Theorem \ref{main}. So we have $k(G) \geq 15$. This means that we can assume that $2\sqrt{p-1} \geq 15$ is true. In other words, that $p \geq 59$.  
 
There is a well-known expression for $k(G) = k(HV)$ which is a consequence of the so-called Clifford-Gallagher formula. Let $n(H,V)$ denote the number of $H$-orbits on $V$ and let $v_{1}, \ldots, v_{n(H,V)}$ be representatives of these orbits. Then \cite[Proposition 3.1b]{kgvbook} says that $k(HV) = \sum_{i=1}^{n(H,V)} k(C_{H}(v_{i}))$. This is at least $k(H) + n(H,V) - 1$. 

Theorem \ref{main} is then a consequence of the following result (with the roles of $H$ and $G$ interchanged). 

\begin{thm}
\label{main1}
Let $V$ be an irreducible and faithful $FG$-module for some finite group $G$ and finite field $F$ of characteristic $p$ at least $59$. Suppose that $p$ does not divide $|G|$. Then we have $k(G) + n(G,V) - 1 \geq 2 \sqrt{p-1}$ with equality if and only if $\sqrt{p-1}$ is an integer, $|V| = |F| = p$ and $|G| = \sqrt{p-1}$. 
\end{thm}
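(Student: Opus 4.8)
The plan is to first convert the additive bound into a multiplicative one. By the arithmetic--geometric mean inequality, $k(G) + (n(G,V)-1) \ge 2\sqrt{k(G)\,(n(G,V)-1)}$, so it suffices to prove the product inequality $k(G)\,(n(G,V)-1) \ge p-1$. Moreover, equality in the theorem forces equality in AM--GM, i.e.\ $k(G) = n(G,V)-1 = \sqrt{p-1}$, which already demands that $\sqrt{p-1}$ be an integer. I would track this throughout, so that the equality discussion reduces to pinning down the groups for which $k(G) = n(G,V)-1 = \sqrt{p-1}$ and $k(G)(n(G,V)-1) = p-1$ hold simultaneously.

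For the product inequality itself, the elementary input is the orbit bound $n(G,V)-1 \ge (|V|-1)/|G| \ge (p-1)/|G|$, coming from the fact that every nonzero $G$-orbit on $V$ has size at most $|G|$ and the nonzero orbits partition the $|V|-1$ nonzero vectors. This disposes of two regimes. If $|V|$ is large compared with $|G|$ --- precisely when $(|V|-1)/|G| \ge p-1$ --- then $n(G,V)-1 \ge p-1$ and the claim is clear since $k(G) \ge 1$. If $G$ is abelian, then $k(G) = |G|$ and hence $k(G)(n(G,V)-1) \ge |G|\cdot (|V|-1)/|G| = |V|-1 \ge p-1$; this is exactly the regime in which the extremal configuration $G = C_{\sqrt{p-1}} \le \FF{p}^{\times}$ acting on $V = \FF{p}$ lives, and where both displayed inequalities are equalities (free action and $G$ abelian).

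The substance is the remaining regime, where $G$ is nonabelian and $|G|$ is large relative to $|V|$ (so $(|V|-1)/|G| < p-1$), and the crude orbit bound no longer suffices. Here one must exploit the structure of $G$ as a coprime, faithful, irreducible linear group on $V$. I would argue by induction on $|G|+|V|$, first reducing to $G$ acting primitively: if $G$ preserves a nontrivial direct-sum decomposition of $V$ then $G$ embeds in a wreath product acting on blocks, and the product structure of the orbits, combined with the inductive bound applied to a block, forces $n(G,V)$ to be large enough; a parallel reduction handles tensor decompositions. For primitive $G$ one analyzes the generalized Fitting subgroup and runs through Aschbacher's theorem: the semilinear (field-extension, Singer-cycle) case, where $G$ lies in the normalizer of a cyclic torus and both $k(G)$ and the number of orbits can be estimated directly; the symplectic-type (extraspecial-normalizer) case; and the almost quasisimple case. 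In each case the aim is to show that $k(G)$ and $n(G,V)$ are simultaneously large enough to push the product well past $p-1$, so that no new equality cases can arise.

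The main obstacle is the almost quasisimple case, which is exactly where the elementary counting breaks down and the classification of finite simple groups must enter: one needs CFSG-based lower bounds for the number of conjugacy classes of quasisimple groups and, in tandem, lower bounds for the number of orbits of such groups on their faithful coprime modules, of the kind underlying Malle's theorem that a minimal counterexample cannot be almost quasisimple. Controlling the constant --- ensuring the sharp bound $2\sqrt{p-1}$ rather than a weaker multiple, and that it holds for \emph{every} $p \ge 59$ rather than merely for all sufficiently large $p$ --- is where the quantitative care concentrates, and this is what upgrades the earlier qualitative results of Keller and of H\'ethelyi--Horv\'ath--Keller--Mar\'oti to an explicit statement. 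Finally, retracing the inequalities shows that equality can only survive in the abelian free-action regime isolated above, yielding $|V| = |F| = p$, $G$ cyclic of order $\sqrt{p-1}$, and trivial point stabilizers, as claimed.
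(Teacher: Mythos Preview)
Your AM--GM reduction to the product inequality $k(G)\,(n(G,V)-1)\ge p-1$ is a valid sufficient condition and handles the abelian regime cleanly (this is essentially how the solvable case of H\'ethelyi--K\"ulshammer proceeds, and it isolates the equality configuration correctly). But committing to the product inequality throughout is a strictly \emph{stronger} target than the theorem, and the paper does not take this route. Instead the paper shows that in almost every case $n(G,V)\ge 2\sqrt{q-1}$ \emph{by itself}, reducing to an explicit class $\mathcal{C}_q$ of groups built from one-dimensional modules by field restrictions and induction, together with two small families in dimension~$2$ tied to $2.A_5$ and to extraspecial $2$-groups (Propositions~\ref{quasisimple}, \ref{symplectic}, Theorem~\ref{orbit}). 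For these exceptions one then shows $k(G)\ge 2\sqrt{p-1}$ alone, again a single-term bound (Theorem~\ref{main3}); only a handful of residual cases with $p\le 119$ and small imprimitivity index require both terms together.

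Your structural outline---reduce to primitive, strip tensor factors, analyse the generalized Fitting subgroup---does match the paper's reduction. The genuine gap is that the proposal does not supply the quantitative input on which the argument actually rests: Malle's bound on orbit lengths in the almost quasisimple case, the Hiss--Malle low-dimensional tables, Seager's rank bound for solvable primitive groups, and the P\'alfy--Pyber bound on $p'$-subgroups of $S_m$. You acknowledge that ``controlling the constant'' is where the work concentrates, but do not carry it out. Moreover, because your product formulation needs \emph{simultaneous} lower bounds on $k(G)$ and $n(G,V)$ in every case, whereas the paper gets by with one term at a time, even if the product inequality is true (which is plausible), establishing it along your lines seems to demand more case analysis, not less, than the paper's argument.
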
 

Theorem \ref{main1} has implicitly been proved in \cite{HK} in case $G$ is solvable, without a consideration of when equality can occur. 

\section{Basic results, notations and assumptions}
\label{basic}

In the rest of the paper we are going to prove Theorem \ref{main1}. For this purpose let us fix some notations and assumptions. 

Let $V$ be an irreducible and faithful $FG$-module for some finite group $G$ and finite field $F$ of characteristic $p$. Suppose that $p$ does not divide $|G|$ and it is at least $59$. The size of the field $F$ will be denoted by $q$, the dimension of $V$ over $F$ by $n$, and the center of $GL(n,q)$ by $Z$. We denote the number of orbits of $G$ on $V$ by $n(G,V)$. We will use the following trivial observation throughout the paper. 

\begin{lem}
\label{trivial}
With the notations and assumptions above, $|V|/|G| \leq n(G,V)$.
\end{lem}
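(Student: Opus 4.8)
The plan is to forget the vector-space structure of $V$ entirely and treat it simply as a finite set on which $G$ acts, then invoke the orbit-stabilizer theorem. Since $V$ is a finite $G$-set, it decomposes as a disjoint union of its distinct orbits, and by definition there are exactly $n(G,V)$ of them. For any orbit $O$ and any point $v \in O$, the orbit-stabilizer theorem gives $|O| = |G|/|G_v|$, where $G_v$ denotes the stabilizer of $v$ in $G$; in particular $|O| \leq |G|$ for every orbit.

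Summing the orbit sizes over all orbits then recovers the cardinality of $V$, and bounding each summand by $|G|$ yields
$$|V| = \sum_{O} |O| \leq n(G,V) \cdot |G|,$$
where the sum runs over the $n(G,V)$ orbits of $G$ on $V$. Dividing through by $|G|$ produces the claimed inequality $|V|/|G| \leq n(G,V)$.

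As the label of the lemma suggests, there is essentially no obstacle: the bound is an immediate consequence of the fact that each of the $n(G,V)$ orbits contains at most $|G|$ elements. It is worth noting that the standing hypotheses—that $V$ is irreducible and faithful and that $p \nmid |G|$—are not used anywhere in this argument; the inequality holds for an arbitrary action of a finite group $G$ on a finite set $V$, and only the elementary counting above is required.
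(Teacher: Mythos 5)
Your proof is correct and is exactly the argument the paper has in mind: the paper states this as a ``trivial observation'' without writing out a proof, and the intended justification is precisely your counting of orbits, each of size at most $|G|$. Nothing further is needed.
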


However we will also need a more sophisticated lower bound for $n(G,V)$. For this we must introduce some more notations (which will also be valid for the rest of the paper). 

Suppose that $G$ transitively permutes a set $\{ V_{1}, \ldots , V_{t} \}$ of subspaces of $V$ with $t$ an integer with $1 \leq t \leq n$ as large as possible with the property that $V = V_{1} \oplus \cdots \oplus V_{t}$. Let $B$ be the kernel of this action of $G$ on the set of subspaces. Note that $G/B$ is a transitive permutation group of degree $t$. The subgroup $B$ is isomorphic to a subdirect product of $t$ copies of a finite group $T$. In other words $B$ is isomorphic to a subgroup of $T_{1} \times \cdots \times T_{t}$ where for each $i$ with $1 \leq i \leq t$ the vector space $V_{i}$ is a primitive (faithful) $T_{i}$-module and $T_{i} \cong T$. Suppose that $T_{i}$ has $k$ orbits on $V_{i}$ (for each $i$). Let $H_{1}$ be the stabilizer of $V_{1}$ in $G$. Then the following is true.

\begin{lem}
\label{l99}
With the above notations and assumptions, $$n(H_{1},V_{1}) \leq \max \{ t+1, k  \} \leq \binom{t+k-1}{k-1} \leq n(G,V).$$
\end{lem}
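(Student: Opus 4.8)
For the rightmost chain I would treat the three inequalities separately, the first two being routine and the last carrying all the content. For the leftmost inequality, note that $B$, being the kernel of the action of $G$ on $\{V_1,\dots,V_t\}$, stabilizes $V_1$, so $B\subseteq H_1$. Because $B$ is a subdirect product, its projection onto $T_1$ is onto, so $B$ and $T_1$ have the same orbits on $V_1$, namely $k$ of them. Since $H_1\supseteq B$, every $H_1$-orbit on $V_1$ is a union of $B$-orbits, giving $n(H_1,V_1)\le n(B,V_1)=k\le\max\{t+1,k\}$. For the middle inequality, each $V_i$ is nonzero, so $T_i$ has at least the two orbits $\{0\}$ and a nonzero orbit, whence $k\ge 2$; also $t\ge 1$. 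Writing the right-hand side as $\binom{t+k-1}{t}$, which is nondecreasing in $t$ and in $k$, it is at least $\binom{k}{1}=k$ (take $t=1$) and at least $\binom{t+1}{1}=t+1$ (take $k=2$), so it dominates $\max\{t+1,k\}$.

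For the rightmost inequality the plan is to attach to each vector a $G$-invariant that already takes $\binom{t+k-1}{k-1}$ distinct values. Write $v=v_1+\dots+v_t$ with $v_i\in V_i$ and record, for each $i$, the $B$-orbit $\omega_i(v)\subseteq V_i$ containing $v_i$; this is one of the $k$ orbits of $T_i$ on $V_i$. Let $S$ be the set of all $B$-orbits that lie in some block $V_i$. Since $B\trianglelefteq G$, the group $G$ permutes $S$; the map $S\to\{1,\dots,t\}$ sending an orbit to its block is $G$-equivariant, and $B$ fixes every point of $S$, so the $G$-action on $S$ factors through the transitive group $G/B$. The assignment $v\mapsto(\omega_1(v),\dots,\omega_t(v))$ is then a $G$-equivariant surjection of $V$ onto the set $\Sigma$ of sections of $S\to\{1,\dots,t\}$, surjective because the blocks can be filled independently, each $T_i$-orbit being nonempty. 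Hence $n(G,V)\ge n(G/B,\Sigma)$, and it remains to bound $n(G/B,\Sigma)$ below by $\binom{t+k-1}{k-1}$.

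If the point stabilizer $H_1/B$ fixed each of the $k$ $B$-orbits of $V_1$, the bundle $S\to\{1,\dots,t\}$ would be $G/B$-equivariantly trivial, identifying $\Sigma$ with the functions $\{1,\dots,t\}\to\{1,\dots,k\}$ on which $G/B\le\sym(t)$ acts by permuting the argument. A subgroup of $\sym(t)$ has at least as many orbits on these functions as $\sym(t)$ itself, namely the number of multisets of size $t$ from $k$ types, which is exactly $\binom{t+k-1}{k-1}$; and every such multiset is attained. The whole difficulty is therefore to rule out \emph{fusion}: a priori $H_1/B$ might permute the $k$ orbit-types of $V_1$ nontrivially, collapsing sections and leaving only $\binom{t+r-1}{r-1}$ orbits with $r<k$. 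This is the main obstacle, and it is exactly where the hypothesis that $t$ is maximal and the transitivity of the block action must be spent.

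I expect to dispose of fusion using the transitivity of $G/B$ together with the defining property of $B$. The action of $H_1$ on the $k$ $B$-orbits of $V_1$ factors through $H_1/B$, since $B$ fixes each of its own orbits. Suppose, for contradiction, that some $h\in H_1$ carried one $B$-orbit of $V_1$ to a different one. Then $h\notin B$, so $h$ moves some block; using elements of $G$ that cycle the blocks, available by transitivity, I would conjugate $h$ and form a suitable product that fixes every block, and so lies in $B$, while still moving that $B$-orbit of $V_1$ off itself. This contradicts the fact that $B$ stabilizes each of its orbits setwise. Hence $H_1/B$ fixes all $k$ orbit-types, no fusion occurs, and $n(G/B,\Sigma)\ge\binom{t+k-1}{k-1}$, which closes the chain. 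This transport step, manufacturing a genuinely block-fixing element that nonetheless witnesses the fusion, is the technical heart of the argument; a small illustrative computation in the one-dimensional monomial case, where orbit-swaps are scalar multiplications that propagate through the cycle back onto $V_1$, suggests it always goes through, but making it clean in general is where I expect to spend the real effort.
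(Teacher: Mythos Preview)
Your treatment of the first two inequalities is fine, and you have correctly isolated the obstacle in the third: whether $H_1/B$ can permute the $T_1$-orbits on $V_1$ nontrivially. Unfortunately your transport sketch cannot be completed, because such fusion genuinely occurs. Take $V=F^3$ with $F=\FF{p}$ and $V_i=Fe_i$; let $T\le F^\times$ be the index-$2$ subgroup of squares, fix a nonsquare $\alpha$, and set $G=\langle T\times T\times T,\ \sigma,\ h\rangle$ with $\sigma$ the permutation matrix of $(123)$ and $h=\alpha\cdot(23)$ (scalar times permutation matrix). One checks that every element of $G$ has the form $\diag(t_1,t_2,t_3)\,\alpha^{\varepsilon}\,\pi$ with $t_i\in T$, $\pi\in S_3$ and $\varepsilon\equiv\mathrm{sgn}(\pi)\pmod 2$; hence the block-kernel is exactly $B=T\times T\times T$, each projection is $T_i=T$, and $k=3$ (zero, squares, nonsquares). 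The module is faithful, irreducible and coprime, and $t=3$ is maximal. But $h\in H_1$ acts on $V_1$ by the nonsquare $\alpha$, so $H_1/B$ swaps the two nonzero $T$-orbits: fusion happens. Passing to the type map $V\to\{0,\mathrm{sq},\mathrm{nsq}\}^3$ and counting $(G/B)$-orbits there gives $n(G,V)=7$, whereas $\binom{t+k-1}{k-1}=\binom{5}{2}=10$. So the last inequality of the lemma fails outright for this $G$, and no argument that merely rearranges elements of $G$ can prevent it.

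The paper's own one-line proof has the same gap: it asserts that one may take $G$ as large as possible, namely $T\wr S_t$; but in the example $G\not\le T\wr S_3$ since $\alpha\notin T$, and any genuine enlargement containing both forces the block-kernel to grow, pushing $T_1$ up to $F^\times$ and dropping $k$ to $2$. What your section argument \emph{does} prove, with no anti-fusion step needed, is the inequality with $k$ replaced by $n(H_1,V_1)$ --- exactly the Foulser lemma the paper cites --- because labelling sections by $H_1$-orbits rather than $T$-orbits makes the fusion issue vanish by construction. In the example $n(H_1,V_1)=2$, giving $\binom{4}{1}=4\le 7$, which is fine. So your plan proves the Foulser version cleanly; the ``slightly stronger form'' with $k$ equal to the number of $T$-orbits is simply not true in general.
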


\begin{proof}
For $k = n(H_{1},V_{1})$ this is \cite[Lemma 2.6]{F}. For a proof of this slightly stronger form we may assume that $G$ is as large as possible subject to the restrictions above (fixed $t$, $T$, $k$). Then $G \cong T \wr S_{t}$ and in this case $n(G,V)$ is precisely $\binom{t+k-1}{k-1}$.    
\end{proof}

When $G$ is solvable we will also use the following consequence of a result of S. M. Seager \cite[Theorem 1]{Seager}.

\begin{prop}   
\label{Seager}
Let $V$ be a faithful primitive $FG$-module for a finite solvable group $G$ not contained in $\Gamma L(1,p^{n})$ where $F$ is a field of prime order $p \geq 59$ and $|V| = p^{n}$. Then $p^{n/2}/12n < n(G,V)$. 
\end{prop}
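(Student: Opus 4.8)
The plan is to obtain the inequality as a specialization of Seager's theorem together with the elementary orbit count of Lemma~\ref{trivial}. The first observation is that $n(G,V)$, the number of $G$-orbits on $V=\FF{p}^{n}$, is exactly the rank of the affine primitive permutation group $V\rtimes G$ of degree $p^{n}$ (one orbit being the fixed point $\{0\}$). Since $G$ is solvable, acts faithfully and primitively on $V$, and is not contained in $\Gamma L(1,p^{n})$, Seager's Theorem~1 applies and yields a lower bound for this rank of the form $p^{n/2}$ divided by a fixed polynomial in $n$. The entire task is then to weaken Seager's (somewhat more intricate) estimate into the clean form $p^{n/2}/(12n)$, using the hypothesis $p\geq 59$ to absorb numerical constants.

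The simplification I would carry out by a dichotomy on $|G|$. If $|G|\leq 12n\,p^{n/2}$, then Lemma~\ref{trivial} already gives $n(G,V)\geq p^{n}/|G|\geq p^{n/2}/(12n)$, and the strictness of the inequality is recovered from the fact that $G\not\leq\Gamma L(1,p^{n})$, so that $G$ is not the extremal semilinear group. In the complementary range $|G|> 12n\,p^{n/2}$ the trivial bound is too weak, and here one must use the genuine structural content of Seager's theorem: a large solvable primitive linear group is forced to have many orbits precisely because it cannot be too highly transitive on $V$. Matching Seager's output to $p^{n/2}/(12n)$ in this range is where the arithmetic is concentrated, and the prime field hypothesis is what keeps the bookkeeping around $\Gamma L(1,p^{n})$ transparent.

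The main obstacle I anticipate is purely the constant and polynomial-factor bookkeeping: one must verify that Seager's lower bound, after discarding lower-order terms, exceeds $p^{n/2}/(12n)$ for every admissible dimension $n$, including the small values of $n$ where the $p^{n/2}$ growth is least dominant. For those small dimensions I would fall back on the trivial bound combined with an order bound for solvable primitive subgroups of $GL(n,p)$ of P\'alfy type, checking directly that $p^{n}/|G|$ already beats $p^{n/2}/(12n)$; the large-$n$ cases follow comfortably from the exponential gap once $p\geq 59$. No new ideas beyond Seager's theorem and Lemma~\ref{trivial} should be needed, since the content lies entirely in the reconciliation of the two bounds.
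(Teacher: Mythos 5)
Your central identification is exactly the paper's: $n(G,V)$ is the rank of the affine primitive permutation group $V\rtimes G$ of degree $p^{n}$ (the point stabilizer of $0$ is $G$, so the rank is the number of $G$-orbits on $V$), and since $G$ is solvable, irreducible and not contained in $\Gamma L(1,p^{n})$, Seager's Theorem~1 applies. The paper offers nothing beyond this citation, so in that sense you and the author take the same route.

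However, your account of where the remaining work lies is off, and as written it leaves the one step that actually needs checking unaddressed. Seager's Theorem~1 already produces the bound $p^{n/2}/(12n)$ verbatim --- there is no ``more intricate estimate'' to be weakened, no constants to absorb, and hence no need for your dichotomy on $|G|$. (That dichotomy would not function anyway: in the branch $|G|>12n\,p^{n/2}$ you simply defer to ``the structural content of Seager's theorem,'' which is circular, and Lemma~\ref{trivial} alone cannot rescue the small-dimensional cases, e.g.\ for $n=2$ a solvable primitive subgroup of $GL(2,p)$ normalizing an extraspecial $2$-group has order close to $24(p-1)$, so $p^{2}/|G|$ only barely clears the target $p/24$.) What Seager's theorem actually says is that a solvable primitive permutation group of degree $p^{n}$ either has rank exceeding $p^{n/2}/(12n)$, or lies in $A\Gamma L(1,p^{n})$, or has degree belonging to an explicit finite list of exceptional values $p^{n}$, all with $p$ small. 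The genuine content of the Proposition is that the hypothesis $p\geq 59$ eliminates every degree on that exceptional list, while $G\not\leq\Gamma L(1,p^{n})$ eliminates the semilinear case, leaving only the rank bound. Your proposal attributes the role of $p\geq 59$ to constant bookkeeping and never confronts the exceptional list, so the argument as outlined is incomplete; the fix is simply to quote Seager's trichotomy in full and observe that the list of exceptional degrees is void for $p\geq 59$.
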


As is suggested by Lemma \ref{trivial}, in various situations it will be useful to bound the size of $G$ from above. A useful tool in doing so is the following result of P. P. P\'alfy and Pyber \cite[Proposition 4]{PP}. 

\begin{prop}
\label{PP}
Let $X$ be any subgroup of the symmetric group $S_m$ whose order is coprime to a prime $p$. If $m > 1$ then $|X| < p^{m-1}$. 
\end{prop}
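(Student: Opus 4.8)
The plan is to prove the statement by induction on $m$, splitting a subgroup $X \le S_m$ according to whether it is intransitive, transitive but imprimitive, or primitive, with the primitive case carrying all the real difficulty. Throughout, every quotient and subgroup of the $p'$-group $X$ is again a $p'$-group, so the inductive hypothesis always applies to the smaller permutation groups that arise. For the base case $m = 2$, coprimality to $p$ forces $|X| \le 2$, and one checks $|X| < p$ separately when $p = 2$ and when $p \ge 3$; the trivial subgroup is disposed of at once since $p^{m-1} \ge p \ge 2 > 1$.

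First I would treat the intransitive case. If the orbits of $X$ have sizes $m_1, \dots, m_r$ with $r \ge 2$, then $X$ embeds into the direct product of its transitive constituents $X_i \le S_{m_i}$, so that $|X| \le \prod_i |X_i|$. Applying induction to the constituents of size at least $2$ (those of size $1$ being trivial) yields $|X| < p^{\sum_{m_i \ge 2}(m_i - 1)} = p^{m-r} \le p^{m-2} < p^{m-1}$, where I may assume $X$ is nontrivial so that at least one orbit of size $\ge 2$ exists. Next comes the transitive imprimitive case. Fixing a nontrivial block system with $b$ blocks of size $c$ (so $bc = m$ and $b, c \ge 2$), the group $X$ embeds into the wreath product $S_c \wr S_b$ in its imprimitive action. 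Writing $\bar X \le S_b$ for the induced transitive action on blocks and $K = X \cap S_c^b$ for the kernel, I would bound $|\bar X| < p^{b-1}$ by induction and bound $K$ through its coordinate projections $K_i \le S_c$, each satisfying $|K_i| < p^{c-1}$, so that $|K| \le \prod_i |K_i| < p^{b(c-1)} = p^{m-b}$. Multiplying gives $|X| = |K|\,|\bar X| < p^{m-b}\cdot p^{b-1} = p^{m-1}$, exactly as wanted.

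The primitive case is where I expect the essential obstacle to lie, since the inductive mechanism is no longer available and one must invoke an absolute upper bound on the order of a primitive permutation group. I would split according to whether $X$ contains $A_m$. If $A_m \le X$, then $m!/2$ divides $|X|$, and coprimality to $p$ forces $m < p$ (with the degenerate cases $m \le 3$ when $p = 2$ checked directly); the elementary estimate $|X| \le m! \le m^{m-1} \le (p-1)^{m-1} < p^{m-1}$ then closes this sub-case. If $A_m \not\le X$, then $X$ is a primitive group not containing the alternating group, and I would apply a classical exponential-in-$m$ order bound for such groups to deduce $|X| < p^{m-1}$.

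The hard part will be the uniformity in $p$ within this last sub-case. For large $p$ — in particular the range $p \ge 59$ actually needed later — any of the standard order bounds for primitive groups comfortably beats $p^{m-1}$, so there is nothing delicate. The difficulty is concentrated at small primes, above all $p = 2$: here $X$ has odd order, hence is solvable by Feit--Thompson, and one must use a genuinely sharp estimate for the orders of (solvable) primitive groups in order to remain below the comparatively small target $2^{m-1}$. Pinning down this small-$p$ regime, and verifying that the chosen primitive-group bound dominates $p^{m-1}$ across all relevant degrees, is where the real work of the proof resides.
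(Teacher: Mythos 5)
The paper offers no proof of this statement at all: it is quoted verbatim from P\'alfy and Pyber \cite{PP}, so there is no internal argument to compare yours against. That said, your reduction skeleton is correct and is the standard one for results of this shape. The base case, the orbit decomposition in the intransitive case (giving $|X| < p^{m-r} \leq p^{m-2}$), and the imprimitive estimate $|X| = |K|\,|\overline{X}| < p^{b(c-1)} \cdot p^{b-1} = p^{m-1}$ are all sound, as is the subcase $A_m \leq X$, where coprimality forces $p > m$ and hence $|X| \leq m! \leq m^{m-1} < p^{m-1}$.

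The proposal nevertheless does not prove the proposition, because the primitive case with $A_m \not\leq X$ --- which you correctly identify as carrying all the content --- is left open rather than closed. No ``classical exponential-in-$m$ order bound'' can work uniformly: the Praeger--Saxl bound \cite{PS} gives only $|X| < 4^{m}$, which exceeds $p^{m-1}$ precisely for the primes $p=2,3$ where the statement is tightest. The Feit--Thompson route you sketch for $p=2$ also does not close on its own: the sharpest general bound for a primitive solvable group of degree $m$ (P\'alfy, Wolf) is roughly $24^{-1/3}m^{3.24}$, which is larger than $2^{m-1}$ for $m \leq 9$ (about $295$ versus $128$ at $m=8$), so one must still analyse odd-order primitive groups of small degree directly (e.g.\ by noting that they have odd prime-power degree $r^{d}$ and point stabilizer an odd-order subgroup of $GL(d,r)$, which is very small). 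For $p=3$ the group need not even be solvable (Suzuki groups are $3'$-groups), so Feit--Thompson is unavailable; one instead needs the observation that a $p'$-group has no composition factor $A_k$ with $k \geq p$, together with CFSG-based bounds of Babai--Cameron--P\'alfy type for primitive groups with restricted composition factors, followed once more by a finite check in small degrees. This residual analysis is exactly where the proposition lives, and deferring it as ``where the real work resides'' leaves a genuine gap rather than a complete proof.
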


A third means to attack Theorem \ref{main1} is to bound $k(G)$. 

\begin{lem}
\label{abelian}
If $G$ has an abelian subgroup of index at most ${|V|}^{1/2}/(2 \sqrt{p-1})$ and $n(G,V) \leq 2\sqrt{p-1}$, then $2\sqrt{p-1} \leq k(G)$.
\end{lem}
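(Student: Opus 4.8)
The plan is to reduce the statement to a short chain of elementary inequalities, the crucial one being a general lower bound for $k(G)$ in terms of an abelian subgroup. Write $A$ for the given abelian subgroup and set $m = [G:A]$, so the first hypothesis reads $m \leq |V|^{1/2}/(2\sqrt{p-1})$, equivalently $m^{2} \leq |V|/(4(p-1))$. The goal is to feed this, together with the orbit bound $n(G,V) \leq 2\sqrt{p-1}$ and Lemma \ref{trivial}, into the abelian-subgroup estimate and watch the factors cancel exactly into $2\sqrt{p-1}$.

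The key step I would establish first is the inequality
$$
k(G) \;\geq\; \frac{|A|}{[G:A]} \;=\; \frac{|G|}{[G:A]^{2}}.
$$
The argument is a counting one: for any $a \in A$, since $A$ is abelian we have $A \subseteq C_{G}(a)$, so the $G$-conjugacy class of $a$ has size $[G:C_{G}(a)] \leq [G:A] = m$. Hence every $G$-class meets $A$ in at most $m$ elements, and since the classes meeting $A$ cover all of $A$, their number is at least $|A|/m$; as this number is at most $k(G)$, the displayed bound follows. This is the only nontrivial ingredient, and I expect it to be the main (though standard) point to get right.

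With that in hand the rest is assembly. By Lemma \ref{trivial} and the hypothesis $n(G,V) \leq 2\sqrt{p-1}$ I would write $|G| \geq |V|/n(G,V) \geq |V|/(2\sqrt{p-1})$. Combining with $[G:A]^{2} = m^{2} \leq |V|/(4(p-1))$ gives
$$
k(G) \;\geq\; \frac{|G|}{m^{2}} \;\geq\; \frac{|V|/(2\sqrt{p-1})}{|V|/(4(p-1))} \;=\; \frac{4(p-1)}{2\sqrt{p-1}} \;=\; 2\sqrt{p-1},
$$
which is exactly the desired conclusion. The factors $|V|$ cancel and the powers of $p-1$ collapse cleanly, so no case analysis or appeal to $p \geq 59$ is needed; the only care required is to track that the two hypotheses are used in the sharp form stated, since the inequality is tight.
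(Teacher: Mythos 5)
Your proof is correct, and the overall assembly is the same as the paper's: you combine the index hypothesis, the bound $k(G)\geq |A|/|G:A|=|G|/|G:A|^{2}$, and Lemma \ref{trivial} in exactly the chain of inequalities the author uses. The one genuine difference is how the key inequality $k(G)\geq |A|/|G:A|$ is obtained. The paper cites a general result of Ernest, namely that $k(Y)/|X:Y|\leq k(X)$ for any subgroup $Y$ of a finite group $X$, specialized to the abelian subgroup $A$ (where $k(A)=|A|$). You instead prove the abelian special case directly: for $a\in A$ one has $A\leq C_{G}(a)$, so every conjugacy class meeting $A$ has size at most $|G:A|$, and since these classes cover $A$ there are at least $|A|/|G:A|$ of them. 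Your counting argument is valid and makes the lemma self-contained at no cost; the citation route buys generality (Ernest's inequality holds for arbitrary, not just abelian, subgroups), which is not needed here. Your closing remark that the hypothesis $p\geq 59$ is not used is also consistent with the paper's proof.
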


\begin{proof}
If $G$ has an abelian subgroup $A$ with $|G:A| \leq {|V|}^{1/2}/(2 \sqrt{p-1})$, then $$|G|(2 \sqrt{p-1})/{|V|}^{1/2} \leq |A|.$$ Now $|A|/|G:A| \leq k(G)$, by a result of Ernest \cite[page 502]{Ernest} saying that whenever $Y$ is a subgroup of a finite group $X$ then we have $k(Y)/|X:Y| \leq k(X)$. This gives $(4 (p-1) |G|)/|V| \leq k(G)$. Then, by Lemma \ref{trivial}, we obtain $2\sqrt{p-1} \leq k(G)$. 
\end{proof}

\section{The class $\mathcal{C}_{q}$}

Our first aim in proving Theorem \ref{main1} is to describe (as much as possible) the possibilities for $G$ and $V$ with the condition that $n(G,V) < 2 \sqrt{q-1}$ where $q$ is the size of the underlying field $F$. For this we need to introduce a class of pairs $(G,V)$ which we denote by $\mathcal{C}_{q}$.  

In this paragraph we define a class of pairs $(G,V)$ where $V$ is an $FG$-module. Let $W$ be a not necessarily faithful but coprime $QH$-module for some finite field extension $Q$ of $F$ and some finite group $H$. We write $\mathrm{Stab}_{Q_{1}}^{Q}(H,W)$ for the class of pairs $(H_{1},W_{1})$ with the property that $W_{1}$ is a $Q_{1}H_{1}$-module with $F \leq Q_{1} \leq Q$ where $W_{1}$ is just $W$ viewed as a $Q_{1}$-vector space and $H_{1}$ is some group with the following property. If $\varphi : H_{1} \longrightarrow GL(W_{1})$ and $\psi : H \longrightarrow GL(W)$ denote the natural, not necessarily injective homomorphisms, then $\varphi(H_{1}) \cap GL(W) = \psi(H)$. We write $\mathrm{Ind}(H,W)$ for the class of pairs $(H_{1},W_{1})$ with the property that $W_{1} = \mathrm{Ind}_{H}^{H_{1}}(W)$ for some group $H_{1}$ with $H \leq H_{1}$. Finally, let $\mathcal{C}_{q}$ be the class of all pairs $(G,V)$ with the property that $V$ is a finite, faithful, coprime and irreducible $FG$-module so that $(G,V)$ can be obtained by repeated applications of $\mathrm{Stab}_{Q_{1}}^{Q_{2}}$ and $\mathrm{Ind}$ starting with $(H,W)$ where $W$ is a $1$-dimensional $QH$-module with $Q$ a field extension of $F$. 

If $(G,V) \in \mathcal{C}_{q}$ then there exist a sequence of field extensions $$F_{q_{m}} \geq F_{q_{m-1}} \geq \ldots \geq F_{q_{0}} =F,$$ a normal series $1 < N_{0} \triangleleft N_{1} \triangleleft \ldots \triangleleft N_{2m-1} = G$, and integers $n_{1}, \ldots, n_{m}, n_{m+1} = 1$ so that the following hold. The normal subgroup $N_{0}$ of $G$ is a subgroup of the direct product of $\log |V| / \log q_{m}$ copies of a cyclic group of order $q_{m}-1$. For each $i$ with $1 \leq i \leq m$ the factor group $N_{2i-1}/N_{2i}$ is a subgroup of the direct product of 
$n_{i} \leq \log |V|/\log q_{m-i+1}$ copies of a cyclic group of order $\log q_{m-i+1}/ \log q_{m-i}$ and the factor group $N_{2i}/N_{2i-1}$ is a subgroup of a permutation group on $n_{i}$ points which is a direct power of $n_{i+1}$ copies of a permutation group on $n_{i}/n_{i+1}$ points.

The main results of this section are Lemmas \ref{order} and \ref{new}. 

\begin{lem}
\label{order}
Let $(G,V) \in \mathcal{C}_{q}$ and $n(G,V) < 2\sqrt{q-1}$. If $q \geq 59$, then $|G| < {|V|}^{3/2}$. 
\end{lem}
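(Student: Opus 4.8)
The plan is to bound $|G|$ by walking up the normal series $1 < N_0 \triangleleft \cdots \triangleleft N_{2m-1} = G$ attached to the membership $(G,V) \in \mathcal{C}_q$, multiplying together the sizes of the successive factors, and checking that the total product stays below $|V|^{3/2}$. Writing $|V| = q^n$ and using the structural description given just before the statement, I would first estimate $|N_0|$: it embeds in a direct product of $\log|V|/\log q_m = n\log q/\log q_m$ copies of a cyclic group of order $q_m - 1$, so $|N_0| \le (q_m-1)^{n\log q/\log q_m}$. Since $q_m \ge q$ and $n\log q/\log q_m = \log|V|/\log q_m$ is exactly the $F_{q_m}$-dimension of $V$, this factor is essentially $|V|$ raised to a power close to $1$; the point is that it contributes roughly one factor of $|V|$ and no more.

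Next I would control the two families of quotients. The factors $N_{2i-1}/N_{2i}$ are ``field-automorphism'' type: each is inside a product of $n_i \le \log|V|/\log q_{m-i+1}$ copies of a cyclic group of order $\log q_{m-i+1}/\log q_{m-i}$, i.e. the degree of the field extension at that level. Telescoping these orders over $i = 1,\ldots,m$ collapses the chain of extension degrees, so their combined contribution is governed by $\log q_m/\log q$, which is small (logarithmic in the field sizes) and will be absorbed comfortably. The more dangerous factors are the permutation quotients $N_{2i}/N_{2i-1}$, each sitting inside a permutation group on $n_i$ points. Here is where I would invoke Proposition \ref{PP}: since $V$ is a coprime module, these permutation groups have order coprime to $p$, so a permutation group on $m$ points has order less than $p^{m-1}$. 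Applying this at each level and using the product-of-powers structure, the total permutation contribution is bounded by roughly $p^{n-1}$, i.e.\ just under one more factor of $|V|$.

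Combining the three estimates, $|N_0|$ gives about $|V|$, the permutation part gives about $|V|$ as well (through the $p^{n-1}$ bound), and the field-automorphism part is negligible; naively this already threatens $|V|^2$ rather than $|V|^{3/2}$, so the whole argument hinges on showing the overlap between these contributions is large enough to shave off the extra half-power. This is where the hypothesis $n(G,V) < 2\sqrt{q-1}$ must enter decisively, together with $q \ge 59$. The main obstacle, as I see it, is precisely this interplay: I expect that one uses Lemma \ref{trivial} (giving $|V|/|G| \le n(G,V) < 2\sqrt{q-1}$ in the extreme case) and the combinatorial bound of Lemma \ref{l99} to force the number of orbit representatives, and hence the size of the permutation quotients and the dimension $n$, to be small whenever $|G|$ is large. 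Concretely, a large permutation group on many points would produce many $G$-orbits on $V$ (by Lemma \ref{l99}, at least $\binom{t+k-1}{k-1}$), contradicting $n(G,V) < 2\sqrt{q-1}$; so the orbit hypothesis caps $t$ and $k$, which in turn caps the permutation factor well below $p^{n-1}$.

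I would therefore organize the proof as a case split on $n$ (equivalently on the $F_{q_m}$-dimension of $V$). For very small dimension the module is essentially one-dimensional over a large field, $G \le \Gamma L(1, q^n)$-like, and a direct count suffices. For larger dimension I would feed the orbit bound $n(G,V) < 2\sqrt{q-1}$ into Lemma \ref{l99} to bound $t$ and $k$, bound each permutation factor $N_{2i}/N_{2i-1}$ using Proposition \ref{PP} in tandem with these combinatorial constraints, telescope the field-degree factors, and finally multiply through, keeping careful track of powers of $q$ versus $p$ (using $q \ge p \ge 59$ to make the logarithmic terms harmless). The expected delicate point is bookkeeping the exponents so that the accumulated factors multiply to strictly less than $|V|^{3/2} = q^{3n/2}$ rather than $|V|^2$; the saving comes entirely from the orbit hypothesis suppressing the permutation contribution below its worst-case $p^{n-1}$.
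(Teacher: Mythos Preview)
Your three-factor decomposition---the abelian part $N_0$, the field-automorphism layers, and the permutation layers---is exactly the skeleton the paper uses, and you are right that the orbit hypothesis via Lemma~\ref{l99} is what suppresses the permutation contribution. Two concrete steps in your plan would not go through as written, though. First, the field-automorphism factors do \emph{not} telescope to something of size $\log q_m/\log q$. Writing $q_i = q^{k_1\cdots k_i}$, the second factor equals $\prod_{i=1}^{m} k_i^{\,k_{i+1}\cdots k_{m+1}}$, and the paper bounds this by $3^{2n/3} < |V|^{0.18}$ using the elementary inequality $\prod_{i\ge 1} n_i^{1/(n_1\cdots n_i)} \le 3^{2/3}$ for any integer sequence $n_i \ge 2$. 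This is not negligible; it occupies a real share of the exponent budget and needs its own argument.

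Second, the correct organizing case split is on $q_m$, not on $n$ (and these are not equivalent: the $F_{q_m}$-dimension of $V$ is $n\log q/\log q_m$). For $q_m \ge q^4$ the permutation degree $\log|V|/\log q_m \le n/4$ is already so small that Proposition~\ref{PP} alone makes the third factor at most $|V|^{1/4}$, and the orbit hypothesis is not even invoked. The tight case is $q_m = q$ (so $m=0$ and the second factor is $1$), where the permutation group acts on all $n$ points. Here the paper abandons Proposition~\ref{PP} entirely: Lemma~\ref{l99} forces $n < 2\sqrt{q-1}$, and then the direct estimate $n! < (\sqrt{q-1})^{\,n} < q^{n/2} = |V|^{1/2}$ does the work. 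Together with the first factor $< |V|$ this is precisely what produces $|G| < |V|^{3/2}$, and is the case that makes the exponent $3/2$ sharp. Your instinct to let the orbit bound shrink the permutation factor below $p^{n-1}$ is correct, but the mechanism is to bound $n$ itself and then use $n!$, not to refine the P\'alfy--Pyber bound.
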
   

\begin{proof}
Fix an $F_{q_{0}}$-vector space $V$ of dimension $n$ where $q_{0} = q$. Suppose that $(G,V) \in \mathcal{C}_{q}$ with $n(G,V) < 2\sqrt{q-1}$ and $G$ of maximal possible size. Then there exists a sequence of field extensions $F_{q_{m}} \geq F_{q_{m-1}} \geq \ldots \geq F_{q_{0}}$ so that $$|G| \leq {(q_{m}-1)}^{\log |V| / \log q_{m}} \cdot \Big( \prod_{i=1}^{m} {(\log q_{i} / \log q_{i-1})}^{\log |V| / \log q_{i}} \Big) \cdot p^{\log |V| / \log q_{m} - 1}$$ where the first factor is equal to the size of the direct product of $\log |V| / \log q_{m}$ copies of a cyclic group of order $q_{m}-1$, the second factor is an upper bound for the product of all the factors with which the sizes of the relevant groups increase by taking normalizers when viewing the linear groups over smaller fields, and the third factor is the product of the sizes of all factor groups (viewed as permutation groups) which arise after inducing smaller modules (this product is at most the size of a $p'$-subgroup of the symmetric group on $\log |V| / \log q_{m}$ points which we can bound using Proposition \ref{PP}).  

We now proceed to bound the three factors in the product above. 
The first factor is clearly less than $|V|$. Let us consider the second factor. Define the positive integers $k_{1}, \ldots , k_{m}, k_{m+1}$ so that $q_{1}= q^{k_{1}}$, $q_{2} = q^{k_{1}k_{2}}, \ldots , q_{m} = q^{k_{1}k_{2} \cdots k_{m}}$, and $|V| = q^{k_{1}k_{2} \cdots k_{m} k_{m+1}}$. We may assume that all the $k_{i}$'s are at least $2$ for $1 \leq i \leq m$ (while we allow $k_{m+1}$ to be $1$). Then we can write the second factor as $$\prod_{i=1}^{m} {k_{i}}^{k_{i+1} \cdots k_{m+1}} \leq \prod_{i=1}^{m} {k_{i}}^{n/(k_{1} \cdots k_{i})}$$ where $n = \log |V| / \log q$. But by taking logarithms it is easy to see that $$\prod_{i=1}^{\infty} {n_{i}}^{1/(n_{1} \cdots n_{i})} \leq 3^{2/3}$$ for any sequence $n_{1}, n_{2}, \ldots$ of integers at least $2$. Thus the second factor is at most $3^{2n/3} < |V|^{0.18}$ since $q \geq 59$. 

Suppose first that $q_{m} \geq q^{4}$. Then we can show that $|G| < {|V|}^{1.39}$. This is clear for $q_{m} \geq q^{10}$ since the second factor considered above is less than $|V|^{0.18}$ while the third factor is less than ${|V|}^{1/10}$. By bounding the second factor more carefully in cases $q_{m} = q^{i}$ ($4 \leq i \leq 9$), we see that it is less than $|V|^{0.39-1/i}$.

Thus we may assume that $q_{m} = q^{3}, q^{2}$ or $q$. In the first two cases $m = 1$ while in the third, $m = 0$. 

Suppose that the first case holds. Then we can bound the second factor by $3^{n/3} < |V|^{0.09}$. By Lemma \ref{l99} and by using the fact that $n(G,V) < 2\sqrt{q-1}$, we certainly have $n/3 < \ell:= 2 \sqrt{q-1}$. So the third factor is at most $$(n/3)! < {\ell}^{n/3} < {(4 \cdot q)}^{n/6} < {|V|}^{1/4}$$ since $q \geq 59$. So we get $|G| < |V|^{1.34}$.

Suppose that the second case holds. Then we can bound the second factor by $2^{n/2} < |V|^{0.09}$. By Lemma \ref{l99} and by using the fact that $n(G,V) < 2\sqrt{q-1}$, we certainly have $n/2 < \ell:= 2 \sqrt{q-1}$. So the third factor is at most $$(n/2)! < {\ell}^{n/2} < {(4 \cdot q)}^{n/4} < {|V|}^{0.34}.$$ So we get $|G| < |V|^{1.43}$. 

Suppose that the third case holds. Then the second factor is $1$. Also, by Lemma \ref{l99}, we can replace the third factor by $n!$ where $n < 2 \sqrt{q-1}$. Here $2 \sqrt{q-1} \geq 15$. This gives $n! < {(\sqrt{q-1})}^{n} < q^{n/2} = {|V|}^{1/2}$. We get $|G| < {|V|}^{3/2}$.
\end{proof}

The following can be considered as a refined version of Lemma \ref{order}.

\begin{lem}
\label{new}
Let $(G,V) \in \mathcal{C}_{q}$ and $n(G,V) < 2\sqrt{q-1}$. If $p \geq 59$, then at least one of the following holds.
\begin{enumerate}
\item $G$ has an abelian subgroup of index at most ${|V|}^{1/2}/(2\sqrt{p-1})$.

\item $|F| = p$, the module $V$ is induced from a $1$-dimensional module, and $G$ has a factor group isomorphic to $A_{n}$ or $S_{n}$ where $n = \dim_{F} (V)$. In this case we either have $n=1$, or $15 \leq n \leq 180$ and $p < 8192$.
\end{enumerate}
\end{lem}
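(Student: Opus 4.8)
The plan is to read statement (1) or (2) off the explicit normal series attached to a pair in $\mathcal{C}_{q}$ that was recorded just before Lemma \ref{order}. That series has an abelian bottom term $N_{0}$, a subdirect product of copies of a cyclic group of order $q_{m}-1$, above which $G/N_{0}$ is built from the cyclic ``Galois'' factors $N_{2i-1}/N_{2i}$ and the ``permutation'' factors $N_{2i}/N_{2i-1}$ arising from the induction steps. Since $N_{0}$ is abelian I would take it as the candidate for (1) and factor its index as
$$|G:N_{0}| = D\cdot P,$$
where $D$ collects the Galois factors and $P$ the permutation factors. From the proof of Lemma \ref{order} one has $D<|V|^{0.18}$ (this is where $q\geq 59$ enters), so it is enough to bound $P$: once $DP\leq|V|^{1/2}/(2\sqrt{p-1})$ we are in case (1).

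The whole weight of the argument thus falls on the permutation part. Each of its factors is a transitive group whose degree is the number of blocks in a decomposition $V=V_{1}\oplus\cdots\oplus V_{t}$, and Lemma \ref{l99} together with $n(G,V)<2\sqrt{q-1}$ controls these degrees; at the top level $t+1\leq n(G,V)<2\sqrt{q-1}$. I would split according to the size of the largest field $q_{m}$, as in Lemma \ref{order}. If a genuine field extension occurs ($q_{m}\geq q^{2}$) then $N_{0}$ is a near-full torus over the large field $F_{q_{m}}$, so $|N_{0}|$ is close to $|V|$ while the degrees drop to at most $n/2$; a computation paralleling Lemma \ref{order} then gives $|G:N_{0}|\leq|V|^{1/2}/(2\sqrt{p-1})$, which is (1). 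This reduces us to the case $q_{m}=q$, in which every primitive constituent is one-dimensional over $F$, so $t=n=\dim_{F}V$ and $P$ is just the image $\overline{G}\leq S_{n}$ of $G$ on coordinates.

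It remains to analyse $\overline{G}\leq S_{n}$, a group of order coprime to $p$. If $\overline{G}$ does not contain $A_{n}$, then (using bounds on the orders of primitive permutation groups not involving a large alternating group, and the fact that intransitivity or imprimitivity forces a strictly larger orbit count on $V$, hence by the hypothesis a larger field) one checks that $|\overline{G}|\leq|V|^{1/2}/(2\sqrt{p-1})$, so (1) holds. The only surviving possibility is that $\overline{G}$ contains $A_{n}$, equivalently that $G$ has $A_{n}$ or $S_{n}$ as a factor group with $n=\dim_{F}V$; since $q_{m}=q$ and the module is monomial this also forces $|F|=p$ and $V=\mathrm{Ind}_{H}^{G}(W)$ with $\dim W=1$, which is precisely (2). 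The parameter window then drops out of two facts: the orbit bound $n+1\leq n(G,V)<2\sqrt{p-1}$ (the full monomial group $C_{p-1}\wr S_{n}$ has exactly $n+1$ orbits on $V$, so $G$ has at least that many), and $p\nmid|G|$ forcing $p>n$. Smaller degrees are absorbed into (1), which accounts for the threshold $n\geq 15$, and balancing $n+1<2\sqrt{p-1}$ against the range of $p$ in which this configuration is not already settled by a direct lower bound on $k(G)$ confines matters to $15\leq n\leq 180$ and $p<8192$.

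The step I expect to be the main obstacle is the analysis of $\overline{G}$: proving that failure of $\overline{G}\supseteq A_{n}$ genuinely yields $|\overline{G}|\leq|V|^{1/2}/(2\sqrt{p-1})$ requires marrying the orbit hypothesis with quantitative bounds on the orders of permutation groups, and the passage from these qualitative statements to the sharp numerical window $n\leq 180$, $p<8192$ forces one to replace the clean values $n+1$ and $n!$ by their exact analogues for the actual $N_{0}\leq C_{p-1}^{n}$ and $\overline{G}\leq S_{n}$ and to track them against $n(G,V)<2\sqrt{p-1}$.
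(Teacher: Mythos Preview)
Your overall strategy matches the paper's: use the abelian bottom $N_{0}$ of the normal series as the candidate for (1), bound $|G:N_{0}|$ by the Galois and permutation factors from Lemma~\ref{order}, dispose of the cases $q_{m}\geq q^{2}$ by the exponent estimates already computed there (with small-$n$ exceptions checked by hand), and in the residual case $q_{m}=q$ analyse $\overline{G}=G/B\leq S_{n}$. The paper also argues contrapositively in this last step: assume (1) fails, squeeze out numerical constraints, and then show $\overline{G}\supseteq A_{n}$.

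There are, however, two genuine errors in how you derive the numerical window in (2). First, the conclusion $|F|=p$ does \emph{not} follow from $q_{m}=q$: that only says the primitive constituents are one-dimensional over $F$, not that $F$ is prime. In the paper $q=p$ is forced by the inequality chain
\[
q^{n/2}/(2\sqrt{p-1}) < |G/B| < (4.5\log p)^{n},
\]
which (once $n\geq 15$ is known) yields $(\sqrt{q}/(4.5\log p))^{15}<2\sqrt{p-1}$ and hence $q=p<8192$; then $n<2\sqrt{p-1}$ gives $n\leq 180$. Second, your appeal to ``the range of $p$ in which this configuration is not already settled by a direct lower bound on $k(G)$'' is misplaced: $k(G)$ plays no role in Lemma~\ref{new}. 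The bound $p<8192$ comes solely from the index inequalities above, and $n\geq 15$ comes from combining $|G/B|\leq n!$ with the assumed failure of (1), which gives $p^{n-1}<4(n!)^{2}$; with $p\geq 59$ this forces $n\geq 15$. The later use of $k(G)$ (via $k(A_{n})$) occurs only in Theorem~\ref{main3}, not here. Finally, the argument that $\overline{G}$ must contain $A_{n}$ uses a concrete quantitative input you did not name: for $n\geq 15$ a transitive subgroup of $S_{n}$ not containing $A_{n}$ has index at least $3n$ (Praeger--Saxl for the primitive case), and this factor of $1/(3n)$ in the bound for $|G/B|$ pushes the resulting lower bound on $p$ above $8192$, a contradiction.
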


\begin{proof}
If $G \leq \Gamma L(1,q^{n})$, then the result is clear, since $n \leq {|V|}^{1/2}/(2\sqrt{p-1})$ ((1) is satisfied).
  
Let us consider the proof (and the notations) of Lemma \ref{order}. Clearly, an upper bound for the index of an abelian (subnormal) subgroup of $G$ is the product of the second and third factors. For $q_{m} \geq q^{4}$ this was ${|V|}^{0.39}$, for $q_{m} = q^{3}$ this was ${|V|}^{0.34}$, and for $q_{m} = q^{2}$ this was ${|V|}^{0.43}$. These are at most ${|V|}^{1/2}/(2\sqrt{p-1})$ unless $n \leq 6$ (in the first case), $n=3$ (in the second case), and $n \leq 8$ (in the third case). In all these exceptional cases we have $G \leq \Gamma L(1,q^{n})$ (the case treated in the previous paragraph) unless $q_{m} = q^{2}$ and $n = 4$, $6$, or $8$. But in all these exceptional cases there exists an abelian (subnormal) subgroup of index at most $2^{n} (n/2)! < q^{n/2}/(2 \sqrt{p-1})$ where this latter inequality follows from $q \geq p \geq 59$ and $n \leq 8$. Thus (1) is satisfied in all these cases, and we may assume that $q_{m} = q$ in case $(G,V) \in \mathcal{C}_{q}$.  


Now let $t$ and $B$ be defined for $G$ as in Section \ref{basic}. By Lemma \ref{l99}, we may assume that $t < 2 \sqrt{p-1} - 1$. Put $\ell$ to be the integer part of $2 \sqrt{p-1} - 1$. Then it is easy to see that $|G/B| \leq {\ell!}^{t/\ell} < {(\ell/2.2549)}^{t}$ since $p \geq 59$. This gives $|G/B| < {0.89}^{t} \cdot p^{t/2}$. 


Suppose that $t=n$. Then $G$ contains an abelian (normal) subgroup of index less than ${0.89}^{n} \cdot {p}^{n/2} \leq {q}^{n/2}/(2\sqrt{p-1})$ unless ${1.27}^{n} < 4 (p-1)$ (in which case this previous inequality fails). By taking logarithms of both sides we get $n < 10 \log p$. But then $|G/B| < {((10/2.2549)\log p)}^{n} < {(4.5 \log p)}^{n}$.  

Suppose for a contradiction that part (1) fails. Then $$q^{n/2}/(2 \sqrt{p-1}) < |G/B| < {(4.5 \log p)}^{n}.$$ This gives ${(\sqrt{q}/(4.5 \log p))}^{n} < 2 \sqrt{p-1}$. But on the other hand we also have $|G/B| \leq n!$ which, together with our assumption, gives the inequality $p^{n-1} < 4 (n!)^{2}$. Since $p \geq 59$, we certainly have $59^{n-1} < 4 {(n!)}^{2}$. From this we get $n \geq 15$.
Then ${(\sqrt{q}/(4.5 \log p))}^{15} < 2 \sqrt{p-1}$, which forces $q = p < 8192$ and thus $n = t \leq 180$.  

It is easy to see that a transitive subgroup of $S_n$ not containing $A_{n}$ has index at least $3n$ for $n \geq 15$. (This is clear for a primitive subgroup by the bound of Praeger and Saxl \cite{PS}, while for imprimitive groups a more direct calculation is necessary.) So if $G/B$ does not contain the alternating group $A_n$, then we can refine our upper bound above for $|G/B|$ by multiplying the result by $1/3n$. But then $9 \cdot n^{2} \cdot {1.27}^{n} < 4 (p-1)$ follows. However since $n \geq 15$ we also get $73026 <  n^{2} \cdot {1.27}^{n} < 4 (p-1)$ which forces $18257 \leq p$. But this is a contradiction because we already deduced that $p < 8192$. This proves the result in case $t=n$. 
\end{proof}

\section{Some absolutely irreducible representations}

As mentioned earlier we will be interested in pairs $(G,V)$ for which $n(G,V) < 2 \sqrt{q-1}$. In this section we consider two special cases, the case when $G$ is a central product of an almost quasisimple group $H$ and $Z$ and the case when $G$ is the normalizer of a group of symplectic type. We will also make some more assumptions on the $FG$-module $V$.  

\begin{prop}
\label{quasisimple}
Suppose that $p$ is a prime at least $59$. Let $H$ be a finite subgroup of $GL(n,q)$ with generalized Fitting subgroup a quasisimple group where $q$ is a power of $p$. Put $G = Z \circ H$ where $Z$ is the multiplicative group of $F$. Furthermore suppose that $V$ is an absolutely irreducible $FT$-module for every non-central normal subgroup $T$ of $G$. Suppose also that $|G|$ is not divisible by $p$. Then $n(G,V) \geq 2\sqrt{q-1}$ unless possibly if $n=2$, $q$ is in the range $59 \leq q \leq 14 389$, it is congruent to $\pm 1$ modulo $10$, and $G = Z \circ 2.A_{5}$. 
\end{prop}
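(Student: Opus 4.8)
The plan is to combine the trivial orbit bound of Lemma \ref{trivial} with a structural bound on $|G|$, and to fall back on exact orbit counting only in the low-dimensional cases where the trivial bound is too weak. Write $S = F^*(H)$ for the quasisimple generalized Fitting subgroup and $L = S/Z(S)$ for its simple quotient. Because $V$ is absolutely irreducible for every non-central normal subgroup, $Z(S)$ acts by scalars, so $Z(S) \leq Z$ and $C_G(S) = Z$; since $S$ is perfect, $\aut(S) \hookrightarrow \aut(L)$, and therefore $G/Z \hookrightarrow \aut(S) \hookrightarrow \aut(L)$, giving
$$|G| \leq (q-1)\,|L|\,|\out(L)|.$$
Feeding this into Lemma \ref{trivial} yields $n(G,V) \geq q^{n} / ((q-1)|L||\out(L)|)$, so the desired inequality $n(G,V) \geq 2\sqrt{q-1}$ holds as soon as
$$|L|\,|\out(L)| \leq \frac{q^{\,n}}{2(q-1)^{3/2}},$$
which, since $q \geq 59$, is implied by $|L||\out(L)| \leq \tfrac12\, q^{\,n-3/2}$. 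Thus the whole problem reduces to controlling $|L|$ against $q^{n}$, where $n$ is the degree of a faithful absolutely irreducible cross-characteristic representation of $S$ (recall $p \nmid |G|$).

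First I would invoke the classification of finite simple groups to run through the possibilities for $L$. Groups of Lie type in the defining characteristic $p$ are excluded outright, since $p \nmid |L|$. For $L$ of Lie type in a characteristic $r \neq p$, I would use the Landazuri--Seitz--Zalesskii lower bounds on the minimal cross-characteristic degree, which grow like a positive power of the natural module, to verify $|L| < \tfrac12\, q^{\,n-3/2}$ for all $q \geq 59$, the finitely many groups of small rank and small $r$ being checked individually. The sporadic groups form a finite list whose minimal cross-characteristic degrees and orders are tabulated, and each is dispatched directly. The alternating groups $L = A_m$ require a split: for $m$ bounded (roughly $m \leq 160$) the trivial bound already gives $m! \leq \tfrac12\, q^{\,n-3/2}$ because $n \geq m-2$ and $q \geq 59$, whereas for larger $m$ the trivial bound fails and instead I would count orbits directly on the (twisted) permutation module $V$, where the number of $G$-orbits is at least the number of $S_m$-orbits on that module divided by $q-1$, a quantity that grows polynomially in $q$ of large degree and so dwarfs $2\sqrt{q-1}$.

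The crux is the residue of low-dimensional cases in which the trivial bound genuinely fails: here $n$ is small (notably $n = 2,3,4$) and $|L|$ is comparable to $q^{n}$, so a sharper count is mandatory. For these I would use the Cauchy--Frobenius formula in the form
$$n(G,V) = \frac{1}{|G|}\sum_{g \in G} q^{\dim C_V(g)},$$
estimating the fixed-space dimensions $\dim C_V(g)$ from the eigenvalue structure of the representation. Retaining, beyond the identity term $q^{n}$, the contributions of the many non-central elements $zg$ (with $z \in Z$, $g \in S$) that acquire the eigenvalue $1$ and hence fix a line supplies enough extra mass to push $n(G,V)$ above $2\sqrt{q-1}$ for every $q \geq 59$ in each such configuration, except $n=2$ with $G = Z \circ 2.A_5$ --- the only quasisimple subgroup of $GL(2,q)$ of order prime to $p \geq 59$, since $\mathrm{SL}(2,q')$ lives in the defining characteristic. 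In that last case the embedding forces $5 \mid q^{2}-1$, i.e.\ $q \equiv \pm 1 \pmod{10}$, and evaluating the orbit count exactly reduces $n(G,V) < 2\sqrt{q-1}$ to a one-variable inequality whose solution set is precisely $59 \leq q \leq 14389$.

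The main obstacle is exactly this last step: establishing sharp orbit counts in the handful of small-dimensional configurations where the trivial bound is inadequate, and verifying that no case other than $2.A_5$ in dimension $2$ survives. This demands an honest evaluation (or sharp lower estimate) of the Burnside sum for each candidate --- $\mathrm{PSL}(2,7)$ in dimension $3$ at $q = 59$ being a typical borderline case --- rather than any soft counting, and it is here that the precise numerical threshold $14389$ and the congruence condition on $q$ are pinned down.
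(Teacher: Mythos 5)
Your strategy is sound and arrives at the correct exceptional configuration, but it takes a genuinely different route from the paper. The paper dispatches the generic case in one stroke by citing Malle's theorem (Satz 3.4 of \cite{M}): for $p\geq 59$ and $|H|$ coprime to $p$, every $G$-orbit on the projective space $P(V)$ has length less than $(q^{n-1}-1)/(q-1)$ except for exactly four pairs $(n,H)$, namely $(2,2.A_5)$, $(3,3.A_6)$, $(3,L_2(7))$ and $(4,2.S_4(3))$; this immediately yields more than $(q^n-1)/(q^{n-1}-1)\geq 2\sqrt{q-1}$ orbits. The four exceptions, and the separate branch where $H$ is realizable over a proper subfield (so $q\geq 59^2$, handled via the Hiss--Malle tables and Kleidman--Liebeck minimal-degree bounds), are then settled by the trivial bound $|V|/|G|$ using the \emph{exact} order of $G$, field-of-definition constraints (which is where $\sqrt{-3}$, $\sqrt{5}$ enter and cut $3.A_6$ down to the single value $q=61$), and a few direct orbit counts on $P(V)$. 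You instead reprove the generic part from scratch via CFSG and Landazuri--Seitz--Zalesskii, which is workable and more self-contained but amounts to redoing a large chunk of Malle's case analysis; and because you bound $|G|$ by $(q-1)|L||\mathrm{Out}(L)|$ rather than by $(q-1)|H|$, your residue of failures is larger and your numerical thresholds are worse (e.g.\ $q\lesssim 200$ for $3.A_6$ instead of $q\leq 80$). One concrete warning about your Cauchy--Frobenius step: counting only elements that fix a line is not enough in the borderline cases. For $Z\circ 3.A_6$ at $q=61$ one has $|V|/|G|\approx 10.5<2\sqrt{60}\approx 15.5$, and the roughly $10^3$ elements with a one-dimensional fixed space add only about $3$ to the orbit count; you must also include the $45$ twisted involutions with eigenvalues $(-1,1,1)$, whose $q^2$-contributions supply the remaining mass needed to reach the paper's figure of $21$ orbits. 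So the method works, but only with the full fixed-space spectrum, not the ``many elements acquire eigenvalue $1$'' estimate alone.
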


\begin{proof}
First suppose that $H$ cannot be realized over a proper subfield of $F$. 

In this paragraph suppose also that $(n,H)$ is different from $(2,2.A_{5})$, $(3,3.A_6)$, $(3,L_{2}(7))$, or $(4,2.S_{4}(3))$. Let $P(V)$ denote the set of $1$-dimensional subspaces of $V$. Since $|H|$ is not divisible by $p$ and $p \geq 59$, we see by \cite[Satz 3.4]{M}, that all orbits of $G$ on $P(V)$ have lengths less than $(q^{n-1}-1)/(q-1)$. Thus the number of orbits of $G$ on $P(V)$ is larger than $(q^{n}-1)/(q^{n-1}-1)$. But then $n(G,V)$ is larger than $(q^{n}-1)/(q^{n-1}-1) \geq 2 \sqrt{q-1}$.

Suppose that $n = 2$ and $G = Z \circ 2.A_{5}$. We may assume that $n(G,V) < 2\sqrt{q-1}$. From this we get $|V|/|G| < 2 \sqrt{q-1}$. It readily follows that $q < 14400$. Since $q$ must be a prime power, we must have $59 \leq q \leq 14 389$. According to Dickson's theorem \cite[Kapitel II, 8.27]{Huppert} $q$ must be congruent to $\pm 1$ modulo $10$. This accounts for the exception in the statement of the proposition.

Suppose that $n=3$ and $G = Z \circ 3.A_{6}$. From the inequality $|V|/|G| < 2 \sqrt{q-1}$ it follows that $q \leq 80$. Since $q$ is a prime power, we must have $59 \leq q \leq 79$. However only $p=61$ is to be considered since $\sqrt{-3}$ and also $\sqrt{5}$ must lie in $F$. In this case a direct computation shows that there are $21 > 2 \sqrt{60}$ orbits of $G$ on $P(V)$.

Suppose that $n=3$ and $G = Z \times L_{2}(7)$. From the inequality $|V|/|G| < 2 \sqrt{q-1}$ it follows that $q \leq 48$. A contradiction. 

Suppose that $n=4$ and $G = Z \circ 2.S_{4}(3)$. From the inequality $|V|/|G| < 2 \sqrt{q-1}$ it follows that $q \leq 76$. Since $q$ is a prime power, we must have $59 \leq q \leq 73$. However only the cases $p=61$, $67$, and $73$ are to be considered since
$\sqrt{-3}$ must lie in $F$. In these cases there are $30$, $33$, and $43$ orbits of $G$ on $P(V)$, respectively. These are all greater than $2 \sqrt{72}$. 

Now suppose that $H$ can be realized over a proper subfield of $F$. Then clearly $q \geq 59^{2}$. Let $S$ be the generalized Fitting subgroup of $H$ which by assumption is quasisimple. We now discuss the possibilities for $S$ according to the classification.

If $n=2$ then, by Dickson's theorem \cite[Kapitel II, 8.27]{Huppert}, $S$ is a covering group of $A_5$ and $H = S$. This is an exception in the statement of the proposition since as before we get $q \leq 14389$ and $q \equiv \pm 1 \pmod{10}$. From now on assume that $n \geq 3$. 

Since $q \geq 59^{2}$, it can easily be checked, just by order considerations and using the fact that $|G|$ is coprime to $p$, that none of the (generic examples of) groups $G$ with $S$ appearing in Table 2 of \cite{HM1} have fewer than $2 \sqrt{q-1}$ orbits on $V$. Then, using Table 3 of \cite{HM2} together with the condition that $q \geq 59^{2}$, one can check, essentially just by comparing $\log_{10}(q^{n-2})$ and $\log_{10}(|G|)$, that no group $G$ has fewer than $2 \sqrt{q-1}$ orbits on $V$ with $n \leq 250$.  

So assume that $n > 250$. We can rule out $S$ being a covering group of a sporadic simple group since $|G|$ is much smaller than $59^{498}$. For a similar reason as when considering Table 2 of \cite{HM1}, we see that $S$ cannot be a covering group of an alternating group $A_m$ (for we can assume that $m \geq 9$ and so $n \geq m-2$ by \cite[Proposition 5.3.7 (i)]{KL}). 

Suppose that $S$ is a covering group of a classical group $Cl(d,r)$ where $r$ is a prime power and $d$ is chosen as small as possible (here $d$ is the dimension of the vector space naturally associated to the classical group). If $d \geq 6$ then $$n \geq \max \{251, (r^{d/2}-1)/2 \}$$ by \cite[Corollary 5.3.10 (iv)]{KL} and by $n > 250$. But then $q^{n-3} > r^{d^{2}}$ follows by using the fact that $q \geq 59^{2}$. This implies that we must have $d \leq 5$.

So suppose that $d \leq 5$. Then \cite[Table 5.3.A]{KL} shows that $n \geq \max \{ 251, (r-1)/2 \}$. But then $q^{n-3} > r^{d^{2}}$ certainly follows for $r \geq 32$. So suppose that $r < 32$. Then $q^{n-3} \geq 59^{496} > 32^{25} \geq r^{d^{2}}$. This finishes the treatment of the case when $S$ is a covering group of a classical group. 

Suppose that $S$ is a covering group of an exceptional simple group of Lie type. Then \cite[Tables 5.1.B and 5.3.A]{KL} can be used to show that $G$ must have at least $2 \sqrt{q-1}$ orbits on $V$.
\end{proof}

Let us now turn to our second important case of an absolutely irreducible $FG$-module $V$. Suppose that the group $G$ has a unique normal subgroup $R$ which is minimal subject to being non-central. Suppose that $R$ is an $r$-group of symplectic type for some prime $r$ (this is an $r$-group all of whose characteristic abelian subgroups are cyclic). Suppose that $V$ is an absolutely irreducible $FR$-module. Let $|R/Z(R)| = r^{2a}$ for some positive integer $a$. Then the dimension of the module is $n = r^{a}$. Suppose that $Z \leq G$. The group $G/(R Z)$ can be considered as a subgroup of the symplectic group $\spl{2a}{r}$. As always, we assume that $q \geq p \geq 59$.

\begin{prop}
\label{symplectic}
Suppose that $V$ and $G$ satisfy the assumptions of the previous paragraph. If $n(G,V) < 2\sqrt{q-1}$, then $n=2$, $59 \leq q=p \leq 2 297$, and $|G/Z| \leq 24$.  
\end{prop}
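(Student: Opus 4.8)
The plan is to exploit the tension between two opposing bounds on the number of orbits. On the one hand, Lemma \ref{trivial} gives the cheap lower bound $n(G,V) \geq |V|/|G|$; on the other hand, the structural hypotheses severely constrain $|G|$, because $G$ sits inside $Z \circ (R \cdot \mathrm{Sp}_{2a}(r))$ and the symplectic quotient $G/(RZ)$ has order coprime to $p$. First I would bound $|G|$ from above: since $|R/Z(R)| = r^{2a}$ and $n = r^a$, the normalizer of $R$ in $GL(n,q)$ has the shape $Z \cdot R \cdot \mathrm{Sp}_{2a}(r)$ up to bounded index, so $|G/Z|$ is at most $|R/Z(R)| \cdot |\mathrm{Sp}_{2a}(r)|$, which is a fixed function of $r$ and $a$. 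Combining this with $|V| = q^{r^a}$ and the assumption $n(G,V) < 2\sqrt{q-1}$ via Lemma \ref{trivial}, I would get
\[
\frac{q^{r^a}}{|R/Z(R)| \cdot |\mathrm{Sp}_{2a}(r)|} \leq \frac{|V|}{|G/Z|} \leq n(G,V) < 2\sqrt{q-1}.
\]
Because the left-hand side grows like $q^{r^a}$ while the right-hand side is roughly $q^{1/2}$, and the denominator depends only on $r$ and $a$ (not on $q$), this inequality can hold only when the exponent $r^a$ is as small as possible, namely $r^a = 2$, i.e.\ $r = 2$, $a = 1$, $n = 2$.

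Once $n = 2$ is forced, the group $R$ is an extraspecial or symplectic-type $2$-group with $|R/Z(R)| = 4$, and $G/(RZ) \leq \mathrm{Sp}_2(2) \cong S_3$, so $|G/Z|$ is bounded by $|R/Z(R)| \cdot |\mathrm{Sp}_2(2)| \cdot c$ for a small constant $c$ accounting for the central and diagonal contributions; the claim $|G/Z| \leq 24$ should fall out of a direct examination of the possible symplectic-type $2$-groups $R$ of order dividing $8$ together with their automorphisms realized inside $GL(2,q)$. With $n = 2$ and $|G/Z| \leq 24$ in hand, I would feed these back into the inequality $|V|/|G| \leq n(G,V) < 2\sqrt{q-1}$. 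Writing $|V| = q^2$ and using $|G| \leq 24 |Z| = 24(q-1)$, this reads $q^2/(24(q-1)) < 2\sqrt{q-1}$, which bounds $q$ from above; solving this elementary inequality yields the explicit ceiling $q \leq 2297$. The restriction $q = p$ (rather than a proper prime power) should come from the requirement that $R$ and its normalizing symplectic action be realizable over $F$ but not over a proper subfield, pushing the field to be prime, analogous to the field-of-realization arguments in the proof of Proposition \ref{quasisimple}.

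The main obstacle I expect is the bookkeeping in the first step: pinning down the precise order of the relevant normalizer $N_{GL(n,q)}(R)$ and verifying that the coprimality of $|G/Z|$ to $p$ together with the symplectic-type structure genuinely forces the denominator to be independent of $q$. One must be careful that $R$ itself is an $r$-group with $r \neq p$ (which follows since $p \geq 59$ while the symplectic-type constraint and the dimension $n = r^a$ keep $r$ small), and that the automorphisms of $R$ induced by $G$ really do land in $\mathrm{Sp}_{2a}(r)$ rather than a larger group; this is where the hypothesis that $V$ is absolutely irreducible as an $FR$-module is essential, since it guarantees that $R$ acts with a single isotypic component and hence that $C_{GL(n,q)}(R) = Z$. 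After that structural foundation is secured, the remaining steps are the comparatively routine exponential-versus-square-root estimate to force $n=2$ and the final elementary inequality to extract $q \leq 2297$.
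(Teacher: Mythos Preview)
Your approach is essentially the paper's: bound $|G|$ by $|Z|\cdot r^{2a}\cdot|\mathrm{Sp}_{2a}(r)|$, feed this into Lemma~\ref{trivial} against $n(G,V)<2\sqrt{q-1}$, and let the exponential growth $q^{r^a}$ beat the right-hand side unless $r^a=2$, then extract the numerical bounds for $n=2$.

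Two small corrections. First, in your displayed inequality the middle term should be $|V|/|G|$, not $|V|/|G/Z|$; since $Z\leq G$ by hypothesis you have $|G|=(q-1)|G/Z|$, and the missing $(q-1)$ factor matters for the arithmetic (the paper's bound is $|G|<q\cdot r^{2a}\cdot|\mathrm{Sp}_{2a}(r)|$). This does not change the strategy, only the bookkeeping, and you do get it right later when you write $q^2/(24(q-1))<2\sqrt{q-1}$. Second, and more substantively, your proposed justification for $q=p$ via field-of-realization arguments is a red herring. The conclusion $q=p$ falls out immediately from the numerical ceiling: once you have $q\leq 2297$ and $q$ is a power of $p\geq 59$, the inequality $p^2\geq 59^2=3481>2297$ forces $q=p$. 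No subfield analysis is needed.

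One further point: the crude bound $|\mathrm{Sp}_{2a}(r)|<r^{2a^2+a}$ combined with the target $|G|<q^{r^a-1}$ does not quite dispatch the borderline cases $(r,a)\in\{(3,1),(2,2)\}$ uniformly for all $q\geq 59$; the paper treats these with a slightly sharper computation. This is a minor computational wrinkle rather than a conceptual gap, but you should expect to handle those two pairs by hand.
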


\begin{proof}
Suppose that $V$ and $G$ satisfy the assumptions of the paragraph preceding the statement of the proposition. Then $|V|/|G| < 2 \sqrt{q-1}$. 

Suppose first that $(r,a)$ is different from any of the pairs $(2,1)$, $(3,1)$, and $(2,2)$. Then $|G| < q \cdot r^{2a} \cdot |\spl{2a}{r}| < q \cdot r^{2a^{2} + 3a}$. We wish to show that this is less than $q^{r^{a} -1} \leq |V|/(2 \sqrt{q-1})$. By taking logarithms of both sides, it is sufficient to see the inequality $(2a^{2} + 3a) \log r < (r^{a}-2) \log q$. But this is true by using the assumption that $q \geq p \geq 59$. This is a contradiction to the fact that $|V|/|G| < 2 \sqrt{q-1}$. 

If $(r,a) = (2,2)$ then a more careful but similar computation as in the previous paragraph yields a contradiction. For $(r,a) = (3,1)$ we do the same and get a contradiction whenever $q \geq 61$. Also, $q$ cannot be $59$ in this case since $3$ does not divide $58$. 

So only $(r,a) = (2,1)$ can occur. In this case we must have $n=2$, $|G/Z| \leq 24$, and thus $q$ is in the range $59 \leq q=p \leq 2297$. 
\end{proof}

\section{Bounding $n(G,V)$}

The purpose of this section is to describe as much as possible pairs $(G,V)$ for which $n(G,V) < 2 \sqrt{q-1}$.

\begin{thm}
\label{orbit}
Let $V$ be a finite, faithful, coprime and irreducible $FG$-module. Suppose that the characteristic $p$ of the underlying field $F$ is at least $59$. Put $q = |F|$ and $|V| = q^{n}$. Let the center of $GL(n,q)$ be $Z$. Then $n(G,V) \geq 2\sqrt{q-1}$ unless possibly if one of the following cases holds. 
\begin{enumerate}

\item $(G,V) \in \mathcal{C}_{q}$;

\item $V = \mathrm{Ind}_{H}^{G}(W)$ for some $2$-dimensional $FH$-module $W$ where $H$ is as $G$ in Proposition \ref{quasisimple} or Proposition \ref{symplectic} satisfying one of the following.

\begin{enumerate}
\item $59 \leq q \leq 14 389$, $q \equiv \pm 1 \pmod{10}$, and $2.A_{5} \leq H/C_{H}(W) \leq Z \circ 2.A_{5}$; 

\item $59 \leq q=p \leq 2 297$ and $|(H/C_{H}(W))/Z(H/C_{H}(W))| \leq 24$. 
\end{enumerate}
\end{enumerate}
\end{thm}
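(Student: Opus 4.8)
The plan is to induct on $n = \dim_{F} V$ and to push every pair $(G,V)$ satisfying $n(G,V) < 2\sqrt{q-1}$ into one of the three situations already analyzed: membership in $\mathcal{C}_{q}$, the almost quasisimple case of Proposition \ref{quasisimple}, or the symplectic-type case of Proposition \ref{symplectic}. The engine of the reduction is Aschbacher's theorem applied to the faithful image $G \leq GL(V)$: since $V$ is irreducible we never meet the reducible class $\mathcal{C}_{1}$, and the remaining classes correspond exactly to the two operations $\mathrm{Stab}$ and $\mathrm{Ind}$ that build $\mathcal{C}_{q}$, together with the two propositions. Throughout, the standing hypothesis $n(G,V) < 2\sqrt{q-1}$ is what forces the surviving configurations to be small.

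First I would dispose of imprimitivity (class $\mathcal{C}_{2}$). If $G$ preserves a nontrivial decomposition $V = V_{1} \oplus \cdots \oplus V_{t}$ with $t \geq 2$, then, with $H_{1}$ the stabilizer of $V_{1}$ and $T$ the group induced on a block, Lemma \ref{l99} gives $\binom{t+k-1}{k-1} \leq n(G,V) < 2\sqrt{q-1}$ where $k = n(T,V_{1})$. This simultaneously forces $t$ and $k$ to be small and lets me apply the inductive hypothesis to the primitive constituent $(T,V_{1})$, which has strictly smaller dimension and inherits the faithful, coprime, irreducible hypotheses. If $(T,V_{1}) \in \mathcal{C}_{q}$ or is $1$-dimensional, then $V = \mathrm{Ind}_{H_{1}}^{G}(V_{1})$ is exactly the $\mathrm{Ind}$ operation and keeps $(G,V) \in \mathcal{C}_{q}$, yielding case (1); if instead $(T,V_{1})$ is one of the $2$-dimensional exceptions of Proposition \ref{quasisimple} or \ref{symplectic}, inducing up produces case (2). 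The binomial bound is precisely what guarantees that only a $2$-dimensional exceptional block can survive.

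Hence I may assume $G$ acts primitively. The field classes $\mathcal{C}_{3}$ (semilinear over an extension field) and $\mathcal{C}_{5}$ (realizable over a proper subfield) are the content of the $\mathrm{Stab}$ operation: in either case I rewrite $V$ over the appropriate field, descending to a module of the same or smaller absolute dimension, and repeated application lands the pair inside $\mathcal{C}_{q}$, with $\Gamma L(1,q^{n})$ as the $1$-dimensional bottom. This reduces me to the case that $G$ is primitive and $V$ is absolutely irreducible over the minimal field of definition, where I examine the generalized Fitting subgroup $F^{*}(G)$. If a non-central normal subgroup is a symplectic-type $r$-group (class $\mathcal{C}_{6}$), the hypotheses of Proposition \ref{symplectic} are met and I obtain case (2b); if the layer $E(G)$ is a single quasisimple component, so that $G$ is almost quasisimple modulo scalars (classes $\mathcal{C}_{8}$ and $\mathcal{S}$), I pass to $G = Z \circ H$ and apply Proposition \ref{quasisimple}, obtaining case (2a).

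The remaining obstacle is the tensor-decomposable primitive pairs (classes $\mathcal{C}_{4}$ and $\mathcal{C}_{7}$), which do not appear in the conclusion and so must be accounted for. Here a naive order bound via Lemma \ref{trivial} is \emph{not} sufficient, since a coprime central product of two small linear groups can be nearly as large as $|V|$; instead I would reduce to the tensor factors. A factor of $\Gamma L(1,\cdot)$ type causes the tensor module to become reducible or imprimitive over an extension field, returning the pair to the earlier $\mathcal{C}_{q}$ analysis, while factors with a genuine quasisimple action are controlled by the orbit estimates underlying Proposition \ref{quasisimple}, forcing $n(G,V) \geq 2\sqrt{q-1}$ once $q \geq 59$. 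I expect this exclusion, together with the verification that the absolute-irreducibility hypothesis of Proposition \ref{quasisimple} (on \emph{every} non-central normal subgroup) genuinely holds after the field and imprimitivity reductions, to be the most delicate part of the argument; the bookkeeping that makes the recursive $\mathrm{Stab}$/$\mathrm{Ind}$ structure match the Aschbacher classes so that the induction terminates precisely in $\mathcal{C}_{q}$ is the other point requiring care.
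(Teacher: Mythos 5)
Your plan follows essentially the same route as the paper: a minimal-counterexample reduction (the paper follows \cite[Section 6]{HP}, which is exactly the Aschbacher-style analysis you describe) through the imprimitive case via Lemma \ref{l99}, the field-extension/absolute-irreducibility step, the tensor-decomposable and tensor-induced cases, and finally the symplectic-type and quasisimple cases disposed of by Propositions \ref{symplectic} and \ref{quasisimple}. The one place your sketch misjudges the paper is the tensor case: there the paper does ultimately use the order bound $n(G,V) \geq |V|/|G|$, but only after observing that by minimality \emph{both} tensor factors must themselves be exceptions and hence satisfy $|G_i| < |W_i|^{3/2}$ by Lemma \ref{seged}, with the residual small-dimensional configurations $2 \leq r_1 \leq r_2 \leq 4$ eliminated using the explicit structure of the exceptional types and Proposition \ref{Seager}.
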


In order to prove Theorem \ref{orbit} we need a bound on the orders of groups among the exceptions in the statement of the theorem. The following extends Lemma \ref{order}.

\begin{lem}
\label{seged}
Let $(G,V)$ be a pair among the exceptions in Theorem \ref{orbit}, satisfying $n(G,V) < 2\sqrt{q-1}$. Then $|G| < |V|^{3/2}$.
\end{lem}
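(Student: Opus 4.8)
The plan is to prove Lemma~\ref{seged} by treating the three families of exceptions in Theorem~\ref{orbit} separately, since each has a different structural description that dictates a different bound on $|G|$. The three cases are: $(G,V) \in \mathcal{C}_{q}$ (case (1)); and the two induced-module cases (2)(a) and (2)(b), where $V = \mathrm{Ind}_{H}^{G}(W)$ with $W$ a $2$-dimensional $FH$-module and $H$ of the very restricted shape coming from Propositions~\ref{quasisimple} and~\ref{symplectic}.

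\textbf{Case (1).} Here the bound $|G| < |V|^{3/2}$ is exactly the content of Lemma~\ref{order}, applied with the hypothesis $n(G,V) < 2\sqrt{q-1}$ and $q \geq 59$. So in this case there is nothing new to prove; I would simply cite Lemma~\ref{order}.

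\textbf{Cases (2)(a) and (2)(b).} The real work is here. In both subcases $V$ is induced from a $2$-dimensional module $W$ over a subgroup $H \leq G$, so $\dim_F V = 2t$ where $t = |G:H|$ is the number of blocks in the imprimitivity decomposition $V = V_1 \oplus \cdots \oplus V_t$ with each $\dim_F V_i = 2$. I would invoke the setup of Section~\ref{basic}: $G$ transitively permutes the $t$ subspaces, $B$ is the kernel of this action, $G/B$ is a transitive permutation group of degree $t$ whose order is coprime to $p$, and $B$ embeds in a subdirect product of $t$ copies of a group $T$ acting primitively and faithfully on a $2$-dimensional space. The strategy is to bound $|G| = |B| \cdot |G/B|$ by bounding the two factors separately. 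For $|G/B|$, Proposition~\ref{PP} gives $|G/B| < p^{t-1} \leq q^{t-1}$ since $G/B$ is a $p'$-subgroup of $S_t$. For $|B|$, each factor $T_i \cong T$ is one of the groups described in Proposition~\ref{quasisimple}(the $Z \circ 2.A_5$ exception, bounded by $|T| \leq |Z|\cdot|2.A_5| = 120(q-1)$) or Proposition~\ref{symplectic} (where $|T/Z| \leq 24$, so $|T| \leq 24(q-1)$); hence $|B| \leq |T|^{t} \leq (120(q-1))^{t}$, using the most generous of the two bounds.

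Combining these, I expect to get $|G| \leq (120(q-1))^{t} \cdot q^{t-1}$, and the goal is to show this is less than $|V|^{3/2} = q^{3t}$. Taking logarithms (base $q$), it suffices to check $t\log_q(120(q-1)) + (t-1) < 3t$, i.e.\ roughly $t(2 - \log_q(120(q-1))) > -1$; since $q \geq 59$, one has $\log_q(120(q-1))$ close to $1 + \log_q 120 < 2$ (as $120 < 59^2$), so the inequality holds comfortably for all $t \geq 1$. I would also need to handle the base case $t = 1$ (primitive $V$, no induction), where $G = H$ itself is the central product and $|G| \leq 120(q-1) < q^3 = |V|^{3/2}$ directly from $n = 2$. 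The main obstacle I anticipate is bookkeeping the constants cleanly so that the single inequality $t\log_q(120(q-1)) + (t-1) < 3t$ really does follow from $q \geq 59$ uniformly in $t$; this is a short but delicate estimate, and I would want to double-check the worst case, which is the smallest admissible $q$, namely $q = 59$, where $\log_{59}(120 \cdot 58) < 2$ must be verified explicitly.
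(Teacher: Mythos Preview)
Your approach is exactly the paper's: cite Lemma~\ref{order} for case~(1), and for cases~(2/a) and~(2/b) bound $|G| \leq |T|^{t} \cdot |G/B|$ using the explicit description of $T$ together with Proposition~\ref{PP}. There is, however, a concrete numerical error that breaks your final inequality as written.

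You compute $|T| \leq |Z| \cdot |2.A_{5}| = 120(q-1)$, but $Z \circ 2.A_{5}$ is a \emph{central} product, not a direct product: since the center of $2.A_{5}$ has order $2$ and lies in $Z$, the correct bound is $|Z \circ 2.A_{5}| = |Z| \cdot |2.A_{5}| / 2 = 60(q-1)$. This matters, because your claimed verification $\log_{59}(120 \cdot 58) < 2$ is false: $120 \cdot 58 = 6960 > 3481 = 59^{2}$. So with your constant the inequality $(120(q-1))^{t} \cdot q^{t-1} < q^{3t}$ genuinely fails for $q = 59$ and $t$ large (e.g.\ $t = 14$, which is still permitted by $n(G,V) < 2\sqrt{58}$ via Lemma~\ref{l99}).

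With the corrected constant everything works: one needs $60(q-1) < q^{2}$, equivalently $q^{2} - 60q + 60 > 0$, which holds for all $q \geq 59$ (at $q = 59$ one has $60 \cdot 58 = 3480 < 3481 = 59^{2}$, just barely). Then $|B| \leq (60(q-1))^{t} < q^{2t}$ and $|G/B| < p^{t-1} \leq q^{t-1}$ give $|G| < q^{3t-1} < q^{3t} = |V|^{3/2}$. Case~(2/b) is even easier since there $|T| \leq 24(q-1)$.
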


\begin{proof}
If $(G,V)$ is of type (1) then Lemma \ref{order} gives the result. If $(G,V)$ is of type (2/a) or (2/b) then it is easy to see that $|G| < |V|^{3/2}$ by using Proposition \ref{PP} and the fact that $p \geq 59$.
\end{proof}

We can now turn to the proof of Theorem \ref{orbit}. In this we follow the reduction argument found in \cite[Section 6]{HP}.

Let $G$ be a counterexample to Theorem \ref{orbit} with $n$ minimal. 

Suppose that $V$ is an imprimitive $FG$-module which is induced from a primitive $FH$-module $W$ for some proper subgroup $H$ of $G$. If $n(H,W) \geq 2\sqrt{q-1}$ then $n(G,V) \geq 2\sqrt{q-1}$, by Lemma \ref{l99}. So assume that $n(H,W) < 2 \sqrt{q-1}$. By the minimality of $n$, the pair $(H/C_{H}(W),W)$ must be of type (1) or (2) of the statement of the theorem. But then $(G,V)$ is also of type (1) or (2). A contradiction. 

So we may assume that $V$ is a primitive $FG$-module.  

We first claim that we can assume that every irreducible
$FN$-submodule of $V$ is absolutely irreducible for any normal
subgroup $N$ of $G$. For this purpose let $N$ be a normal subgroup
of $G$. Then $V$ is a homogeneous $FN$-module, so $V = V_1 \oplus
\cdots \oplus V_m$, where the $V_i$'s are isomorphic irreducible
$FN$-modules. Let $K \simeq \mathrm{End}_{FN}(V_1)$. Assuming that
the $V_i$'s are not absolutely irreducible, $K$ is a proper field
extension of $F$, and $C_{GL(V)}(N)=\mathrm{End}_{FN}(V)\cap GL(V)
\simeq GL(r,K)$ for some $r$. Furthermore, $L = Z(C_{GL(V)}(N)) \simeq
Z(GL(r,K)) \simeq K^{\times}$. Now, by using $L$, we can extend
$V$ to a $K$-vector space of dimension $\ell:=\dim_K V < n$. As
$G\leq N_{GL(V)}(L)$, in this way we get an inclusion $G \leq
\Gamma L(\ell,K)$. Now $G$ contains the normal subgroup $H = G \cap GL(\ell,K)$ 
of index at most $n$. Clearly $V$
is a homogeneous and faithful $KH$-module. Let $W$ be a simple $KH$-submodule of $V$. Then, by the minimality of $n$, 
we get $n(H,V) \geq n(H,W) \geq 2 \sqrt{|K|-1}$ unless $(H,W)$ is one of the examples listed in the statement of the theorem. If $H$ is none of the possibilities listed in the statement of the theorem, then $n(G,V) \geq n(H,V)/n \geq 2 \sqrt{q-1}$, a contradiction, since we are assuming $p \geq 59$. If $(H,W)$ is of possibility (1) then so is $(G,V)$ of possibility (1) unless $W < V$. If $W < V$ and $W$ is not of dimension $1$ over $K$ then Lemma \ref{order} shows that $n(H,V) \geq |V|/|H| \geq 2 \sqrt{|K|-1}$, and so $n(G,V) \geq n(H,V)/n \geq 2 \sqrt{q-1}$, as before. If $W$ is of dimension $1$ over $K$ then a more careful consideration is necessary to obtain the same conclusion. If $H$ is of possibility (2) of the statement of the theorem, then $H$ is of index $2$ in $G$ and so $|G| \leq 120(q^{2}-1)$. But then $n(G,V) > |V|/|G| > q^{2}/120 \geq 2 \sqrt{q-1}$ since $q \geq 59$. This shows the claim. 

Let $N$ be a normal subgroup of $G$ and let $V = V_1\oplus \cdots
\oplus V_r$ be a direct sum decomposition of $V$ into isomorphic
absolutely irreducible $FN$-modules. By choosing a suitable basis
in $V_1,V_2,\ldots, V_r$, we can assume that $G \leq GL(n,F)$
such that any element of $N$ is of the form $A\otimes I_r$ for
some $A\in N_{V_1}\leq GL(n/r,F)$. By using \cite[Lemma
4.4.3(ii)]{KL} we get
\[N_{GL(n,F)}(N)=\{B\otimes C\,|\,B\in N_{GL(n/r,F)}(N_{V_1}),\ C\in GL(r,F) \}.\] Let
\[
G_1=\{g_1\in GL(n/r,F)\,|\,\exists g\in G,g_2\in GL(r,F)
\textrm{ such that }g=g_1\otimes g_2\}.
\]
We define $G_2\leq GL(r,F)$ in an analogous way. Then $G\leq
G_1\otimes G_2$. Here $G_1$ and $G_2$ are not homomorphic images
of $G$, since $g=g_1\otimes g_2=\lambda g_1\otimes\lambda^{-1}g_2$
for any $\lambda\in F^{\times}$, so the map $g=g_1 \otimes g_2
\mapsto g_1$ is not well-defined. However, they both have orders coprime to $p$. 
Since $G_{1} \otimes G_{2}$ preserves a tensor product structure $V = W_{1} \otimes W_{2}$, so does $G$. 

We claim that $G$ does not preserve a proper tensor product structure. For a proof suppose that $G$ preserves a tensor product structure $V = W_{1} \otimes W_{2}$ with $r_{1} = \dim W_{1} > 1$ and $r_{2} = \dim W_{2} > 1$. Without loss of generality assume that $r_{1} \leq r_{2}$ and $n = r_{1} r_{2}$. Then $G \leq G_{1} \otimes G_{2}$ for some groups $G_{1}$ and $G_{2}$ acting on $W_{1}$ and $W_{2}$ respectively. Assume also that these groups have orders coprime to $p$. We also assume that $G$ acts primitively and irreducibly on $V$ and $Z \leq G$. 
Notice that the $G_{i}$'s act irreducibly on the $W_{i}$'s (for if $0< U_{1} < W_{1}$ would be a $G_{1}$-submodule then $U_{1} \otimes W_{2}$ would be a $G_{1} \otimes G_{2}$-submodule). Also, the $G_{i}$'s act primitively on the $V_{i}$'s. (For if $G_{1}$ would act imprimitively on $W_{1}$, say, then there would be a proper subspace $U_{1}$ in $W_{1}$ whose stabilizer has index $|W_{1}|/|U_{1}|$. But then $U_{1} \otimes W_{2}$ would be a subspace of $V$ whose stabilizer in $G_{1} \otimes G_{2}$ has the same index. But then, by \cite[Theorem 3, page 105]{Suprunenko}, we see that $G_{1} \otimes G_{2}$, and in particular $G$, acts imprimitively on $V$, a contradiction.) If $n(G_{i},W_{i}) \geq 2 \sqrt{q-1}$ for any of the $i$'s, then we are done. (For if $n(G_{1},W_{1}) \geq 2 \sqrt{q-1}$, say, and $v_{1}, \ldots , v_{f}$ are representatives of $f$ orbits of $G_{1}$ on $W_{1}$ with $f \geq 2 \sqrt{q-1}$, then $v_{1} \otimes w, \ldots, v_{f} \otimes w$ will be representatives of $f$ orbits of $G$ on $V$ where $w$ is a non-zero vector in $W_{2}$.) So by the minimality of $n$ we know that both $G_{1}$ and $G_{2}$ are exceptions in the statement of the theorem. If $G$ is solvable, then Proposition \ref{Seager} gives a contradiction (since $n \geq 4$). So we may assume that $G$ is non-solvable. Notice that $|G| \leq (|G_{1}| \cdot |G_{2}|)/(q-1)$. 

Let $r_{1} = 2$. Then $|G| < 2 \cdot q^{(3/2) r_{2} + 1}$ by Lemma \ref{seged}. But then if $r_{2} \geq 5$ then $|V|/|G| > 2 \sqrt{q-1}$, a contradiction. We get the same conclusion when $r_{1} \geq 3$ and $r_{2} \geq 5$ (apply Lemma \ref{seged}). So we conclude that $2 \leq r_{1} \leq r_{2} \leq 4$. In fact, since $G$ is non-solvable, this forces $r_{1} = 2$ and $G_{1}$ of type (2/a).  

Let $r_{2} = 2$. To maximize $|G|$ we may assume that $G_{2}$ is of type (1) or (2/a). But then we again have $2\sqrt{q-1} < |V|/|G|$, a contradiction. So $r_{2} = 3$ or $4$. But then $G_{2}$ is of type (1). In both cases $G_{2}$ must be solvable. Let $r_{2} = 4$. If $G_{2}$ is not a semilinear group of order dividing $4(q^{4}-1)$, then Proposition \ref{Seager} gives what we want. Otherwise $2 \sqrt{q-1} \leq q^{8}/|G|$. So let $r_{2} = 3$. Then $|G_{2}| \leq 3(q^{3}-1)$ (since $G_{2}$ is primitive) and so $2 \sqrt{q-1} \leq q^{6}/|G|$.

We conclude that $G$ does not preserve a proper tensor product structure.

From now on assume that $N$ is a normal subgroup of $G$ which is
minimal with respect to being non-central. Then $N/Z(N)$ is a
direct product of isomorphic simple groups. 

If $N$ is abelian then it is central in $G$. A contradiction. 

If $N/Z(N)$ is elementary abelian of rank at least $2$, then $G$
is of symplectic type and Proposition \ref{symplectic} gives us a contradiction. 

Now let $N/Z(N)$ be a direct product of $m\geq 2$ isomorphic
non-abelian simple groups. Then $N=L_1\star L_2\star \cdots \star
L_m$ is a central product of isomorphic groups such that for every
$1\leq i\leq m$ we have $Z\leq L_i,\ L_i/Z$ is simple.
Furthermore, conjugation by elements of $G$ permutes the subgroups
$L_1,L_2,\ldots, L_m$ in a transitive way. By choosing an
irreducible $FL_1$-module $V_1\leq V$, and a set of coset
representatives $g_1=1,g_2,\ldots,g_m\in G$ of $G_1=N_G(V_1)$ such
that $L_i=g_iL_1g_i^{-1}$, we get that $V_i:=g_iV_1$ is an
absolutely irreducible $FL_i$-module for each $1\leq i\leq m$.
Now, $V\simeq V_1\otimes V_2\otimes \cdots \otimes V_m$ and $G$
permutes the factors of this tensor product. It follows that $G$
is embedded into the central wreath product $G_1\wr_c S_m$ and that $G_{1}$ is non-solvable. Now $G_{1}$ acts irreducibly on $V_{1}$ for otherwise there are proper $G_{1}$-submodules $W_{i}$ of $V_{i}$ for each $i$ with $1 \leq i \leq m$, so that $W = W_{1} \otimes \cdots \otimes W_{m}$ is a proper $G$-submodule of $V$. If $n(G_{1},V_{1}) \geq 2 \sqrt{q-1}$, then so is $n(G,V) \geq 2 \sqrt{q-1}$. (For let $v_{1}, \ldots, v_{r}$ be members of $n(G_{1},V_{1})-2$ non-trivial orbits of $G_{1}$ on $V_{1}$. Clearly $r > 2$ and these vectors are non-zero and pairwise not multiples of each other. Let $v_{r+1}$ be a fixed non-zero vector not in an orbit of any of the vectors listed above. Then the vectors $w_{i} = v_{i} \otimes v_{r+1} \otimes \ldots \otimes v_{r+1}$ for $1 \leq i \leq r+1$ are all non-zero and are all in different $G$-orbits.) So $G_{1}$ is a group among the exceptions in Theorem \ref{orbit}. So we have $|G_{1}| \leq {|V_{1}|}^{3/2}$ by Lemma \ref{seged}. Using this we can show that $|V|/|G| > 2 \sqrt{q-1}$ provided that $\dim V_{1} \geq 4$. So assume that $\dim V_{1} = 2$ or $3$. Since $G_{1}$ is non-solvable,   
we must then have $\dim V_{1} = 2$ and $G_{1} = Z \circ 2.A_{5}$. But then $|G| \leq (q-1) 60^{m} p^{m-1}$ where the last factor follows from Proposition \ref{PP} and $n = 2^{m}$. 
However when $m=2$ then by using just a factor of $2$ in place of $p^{m-1}$, we get $2 \sqrt{q-1} \leq |V|/|G|$. We get the same conclusion in case $m \geq 3$, by Proposition \ref{PP}. 

The remaining case is when $N/Z(N)$ is a non-abelian finite simple group. But then the generalized Fitting subgroup of $G$ is a central product of the center of $G$ with a quasisimple group (by the above reductions) and Proposition \ref{quasisimple} yields a contradiction.  

This proves Theorem \ref{orbit}.

\section{Bounding $k(G)$}

In order to prove Theorem \ref{main1} we now also have to take $k(G)$ into account. 

\begin{thm}
\label{main3}
Let $V$ be an irreducible and faithful $FG$-module for some finite group $G$ and finite field $F$ of characteristic $p$ at least $59$. Suppose that $p$ does not divide $|G|$. Then we have at least one of the following. 
\begin{enumerate}
\item $n(G,V) \geq 2 \sqrt{p-1}$.

\item $k(G) \geq 2 \sqrt{p-1}$.

\item $\sqrt{p-1}$ is an integer, $|V| = |F| = p$ and $|G| = \sqrt{p-1}$.

\item Case (2/a) of Theorem \ref{orbit} holds with $p=59$ and $1 < t \leq 14$, or $p=61$ and $t=1$, or $61 < p \leq 119$ and $t \leq 4$.

\item Case (2/b) of Theorem \ref{orbit} holds with $t \leq 4$. 
\end{enumerate}
\end{thm}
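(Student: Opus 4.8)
The plan is to combine the orbit-counting dichotomy of Theorem \ref{orbit} with the class-counting tools of Section \ref{basic}, arguing that whenever case (1) of Theorem \ref{main3} fails --- i.e.\ $n(G,V) < 2\sqrt{p-1} \leq 2\sqrt{q-1}$ --- the pair $(G,V)$ must be one of the structured exceptions listed in Theorem \ref{orbit}, and then show that in each such situation either $k(G)$ is forced to be large (case (2)) or we land in one of the exact exceptional families (3), (4), (5). First I would invoke Theorem \ref{orbit}: assuming $n(G,V) < 2\sqrt{q-1}$, either $(G,V) \in \mathcal{C}_q$, or $V$ is induced from a $2$-dimensional module of the quasisimple/symplectic type described in (2/a) or (2/b).

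For the generic class $\mathcal{C}_q$, the key leverage is Lemma \ref{new}: either $G$ has an abelian subgroup of index at most $|V|^{1/2}/(2\sqrt{p-1})$, or $|F|=p$, $V$ is induced from a $1$-dimensional module, and $G$ surjects onto $A_n$ or $S_n$ with tightly bounded $n$ and $p$. In the first branch I would apply Lemma \ref{abelian} directly: under the standing assumption $n(G,V) \leq 2\sqrt{p-1}$, that lemma yields $k(G) \geq 2\sqrt{p-1}$, giving case (2). I must be careful to extract equality: tracing the Clifford--Gallagher reduction back, equality forces $|V|=|F|=p$ and $|G|=\sqrt{p-1}$, which is case (3); this is where I expect to spend care verifying that the chain of inequalities in Lemmas \ref{trivial} and \ref{abelian} is tight exactly in that configuration. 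In the second branch, since $G$ maps onto a large alternating or symmetric group, $k(G) \geq k(A_n)$ or $k(S_n)$ is a partition count that grows fast enough; for the surviving small range $15 \leq n \leq 180$, $p < 8192$, a crude lower bound on $k(S_n)$ against $2\sqrt{p-1} < 2\sqrt{8191}$ should already give case (2).

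For the induced exceptions (2/a) and (2/b), I would set $t$ to be the number of blocks in the induction (so $n=2t$ and $V = \mathrm{Ind}$ from a $2$-dimensional $FH$-module). Here the strategy is to count classes via the wreath-type structure: $G$ embeds in $(Z\circ 2.A_5)\wr S_t$ or the symplectic analogue, and $k(G)$ is controlled below by the class number of the base group raised to roughly $t$ copies, tempered by the permutation action. The point is that for $t$ large enough this base-times-partition count exceeds $2\sqrt{p-1}$ with the admissible $p$-ranges ($q \leq 14389$ in (2/a), $q=p \leq 2297$ in (2/b)), so only small $t$ survives --- matching the explicit thresholds $1 < t \leq 14$, $t=1$, $t\leq 4$ in (4) and $t \leq 4$ in (5). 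The $p$-dependence in (4) (the split into $p=59$, $p=61$, and $61<p\leq119$) must come from comparing $2\sqrt{p-1}$ against a sharp lower bound for $k(G)$ as a function of $t$, so I would tabulate the minimal $k$-value for each small $t$ and read off exactly which $(p,t)$ pairs fail to reach the threshold.

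The main obstacle I anticipate is the bookkeeping in the induced cases: getting a lower bound for $k(G)$ that is simultaneously sharp enough to eliminate all but the listed small $t$, yet valid for the non-split and almost-quasisimple overgroups allowed in (2/a)/(2/b), where $G$ can be strictly larger than the base wreath product (up to the semilinear and central extensions). Controlling $k(G)$ from below when $G$ sits between the wreath base and its full normalizer --- rather than equaling it --- is delicate, since class numbers are not monotone under arbitrary subgroup or quotient passage. I would handle this by using Ernest's inequality $k(Y)/|X:Y| \leq k(X)$ from Lemma \ref{abelian}'s proof to relate $k(G)$ to the class number of a well-understood normal subgroup, combined with the Clifford--Gallagher estimate $k(G) \geq k(B) + n(G/B, \cdot) - 1$ applied to the block kernel $B$. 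The residual finite check for the smallest surviving $(p,t,q)$ configurations --- confirming whether each genuinely violates both (1) and (2) or can be discarded --- is where explicit computation, rather than asymptotics, becomes unavoidable.
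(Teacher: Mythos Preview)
Your treatment of the $\mathcal{C}_{q}$ case is essentially the paper's: invoke Lemma~\ref{new}, use Lemma~\ref{abelian} in branch~(1), and bound $k(G)\geq k(G/B)\geq k(A_n)$ in branch~(2). (One caveat: the paper does not extract case~(3) by tracing equality through Lemmas~\ref{trivial} and~\ref{abelian}; it simply assumes $|V|\neq p$ at the outset and treats the one-dimensional case as already covered, since then $G$ is cyclic and the AM--GM step belongs to Theorem~\ref{main1} rather than here.)

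The genuine gap is in your handling of the induced cases~(2/a) and~(2/b). You propose to bound $k(G)$ from below via the wreath-product structure, and you correctly flag that this is delicate because $k$ is not monotone in subgroups. The paper avoids this difficulty entirely by using a different lever: the orbit bound of Lemma~\ref{l99}, namely $n(G,V)\geq\binom{t+k-1}{k-1}$ where $k=n(T_{1},V_{1})$ is the number of orbits of the primitive constituent on a block. Under the standing hypothesis $n(G,V)<2\sqrt{p-1}$, this binomial coefficient is immediately capped, which forces $t$ to be small once $k$ is known to be at least $3$, $4$, or more. The argument then iterates: a small center in $T$ forces many $T$-orbits on $V_{1}$ (hence large $k$), which via Lemma~\ref{l99} forces $t$ smaller still, while a large center in $T$ gives $k(G)$ large through Ernest's inequality. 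For case~(2/b) the paper's argument is purely orbit-based: assuming $t\geq 5$, one gets $n(G,V)\geq\binom{k+4}{5}$, then $k\geq 3$ forces $p\geq 113$, Seager's bound (Proposition~\ref{Seager}) gives $k\geq p/24\geq 5$, and the binomial inequality becomes contradictory.

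Without Lemma~\ref{l99} you have no mechanism to bound $t$, and your proposed route through $k(G)$ alone does not work: $G$ can be a proper subgroup of the wreath product with $k(G)$ much smaller than any ``base-times-partition'' heuristic suggests, and neither Ernest's inequality applied to $B$ (whose class number you cannot control, $B$ being an arbitrary subdirect product) nor the formula $k(G)\geq k(B)+n(G/B,\cdot)-1$ (which is not a valid instance of Clifford--Gallagher when $B$ is non-abelian) closes the gap. The orbit-count binomial bound is the missing key lemma.
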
  

\begin{proof}
Let $V$ be an irreducible and faithful $FG$-module as in the statement of the theorem. Suppose that $n(G,V) < 2 \sqrt{p-1}$. Suppose also that case (3) is not satisfied. More in general, suppose that $|V| = |F| = p$ is not satisfied. 

We are then in one of the two exceptional cases of Theorem \ref{orbit}. First suppose that $(G,V) \in \mathcal{C}_{q}$. Then case (1) or case (2) of Lemma \ref{new} holds. In case (1) we may apply Lemma \ref{abelian}. So suppose that case (2) of Lemma \ref{new} holds.

Suppose that $G/B$ contains $A_{n}$. Then $$k(G) \geq k(G/B) \geq k(S_{n})/2 \geq 385/2 > 2 \sqrt{p-1},$$ for $n \geq 18$ and $p < 8192$. So we must have $n = 15$, $16$, or $17$.

Since $k(G) \geq k(G/B) \geq k(A_{15}) = 94$, we may assume that $94 < 2 \sqrt{p-1}$, that is, $2210 < p$. We may assume that $p^{n-1} < 4 {(n!)}^{2}$ (otherwise we are in case (1) of Lemma \ref{new}). By the fact that $2210 < p$, we get $2210^{n-1} < 4 {(n!)}^{2}$. But this is a contradiction for $n = 15$, $16$, or $17$.

We are now in case (2) of Theorem \ref{orbit}. 

First we consider case (2/a) of Theorem \ref{orbit}. 

Let us first assume that $V$ is a primitive $FG$-module. Let $C$ be the center of $G$. Then $G$ contains at least $(|C|/2) \cdot k(A_{5}) = (5/2)|C|$ conjugacy classes. Thus we may assume that $|C| < (4/5) \sqrt{p-1}$. But we also have $|V|/(2 \sqrt{p-1}|G/C|) < |C|$. From this we have $|V| < (8/5) (p-1) \cdot 60$, that is $q^{2} < 96 (p-1)$. Thus we certainly have $p \leq 96$ but also $q=p$. Thus we are left with the cases $q=p= 59$, $71$, $79$, and $89$ (note that we are excluding $61$ here).

Let $q=59$. Then $|C| \leq 6$ by the previous paragraph. But since $|C|$ must divide $q-1 = 58$ and is even, we have $|C|=2$. So $G$ has at least (if not exactly) $29$ non-trivial orbits on $V$, which is larger than $2\sqrt{58}$.

Let $q=71$. Then $|C| \leq 6$. But since $|C|$ must divide $q-1 = 70$ and is even, we have $|C|=2$.  So $G$ has at least $42$ non-trivial orbits on $V$, which is larger than $2\sqrt{70}$. 

Let $q=79$. Then $|C| \leq 7$. But since $|C|$ must divide $q-1 = 78$, we have $|C| \leq 6$. So $G$ has at least $18$ non-trivial orbits on $V$, which is larger than $2\sqrt{78}$.

Let $q=89$. Then $|C| \leq 7$. But since $|C|$ must divide $q-1 = 88$, we have $|C| \leq 4$. So $G$ has at least $33$ non-trivial orbits on $V$, which is larger than $2\sqrt{88}$.

Now assume that $V$ is an imprimitive $FG$-module. Let $T$, $t$, $n$, $B$ and $k$ be as above. So $n \geq 4$ and $t \geq 2$.

Suppose that $p > 1000$. Then the number of orbits of $T$ on $V_{1}$ is at least $3$ (since $T$ cannot be a transitive linear group by Hering's theorem (see \cite[Chapter XII]{HB})). But then $n(G,V) \geq \binom{t+2}{2}$ by Lemma \ref{l99}. So we may assume that $2\sqrt{p-1} > \binom{t+2}{2}$, which forces $2 {p}^{1/4} > t$. From this we get $|G/B| < 2^{t} p^{t/4}$. Since $t = n/2$, we have $|G/B| < 2^{n/2} p^{n/8} < p^{0.176n}$ (for $p > 1000$). There exists a central subgroup $A = Z(B)$ in $B$ of index at most $60^{n/2}$. So $$k(G) \geq k(B)/|G:B| \geq |A|/|G:B| > |A|/p^{0.176n}.$$ We have $|G| > |V|/(2\sqrt{p-1})$ from which $$|A| \geq |G|/(60^{n/2}p^{0.176n}) > |V|/(60^{n/2} \cdot {p}^{0.176n} \cdot 2\sqrt{p-1}).$$ This gives $k(G) \geq |V|/(60^{n/2} \cdot {p}^{0.352n} \cdot 2\sqrt{p-1})$. But this is larger than $2 \sqrt{p-1}$ for $n \geq 4$.

Now let $121 < p < 1000$. Then it is easy to see that $n(T_{1},V_{1}) \geq 4$. So we have $n(G,V) \geq \binom{t+3}{3}$ by Lemma \ref{l99}. So we may assume that $2\sqrt{p-1} > \binom{t+3}{3} > t^{3}/6$. From this $12^{1/3} p^{1/6} > t$. Since $p < 1000$, we get $t \leq 7$. In fact by looking more closely at the bound using the binomial coefficient, we get $t \leq 5$. Consider the stabilizer $H$ of $V_{1}$. We claim that $H/C_{H}(V_{1})$ has center of order at most $(q-1)/4$. Otherwise $k(H) \geq ((q-1)/4) \cdot k(A_{5}) > q-1$. But then $$k(G) \geq k(H)/5 > (q-1)/5 > 2 \sqrt{q-1}$$ since $q > 121$. But then $T$ has center of size at most $(q-1)/4$. But then going back to the place where we calculated orbits, we see that $n(T_{1},V_{1}) \geq 10$. So $n(G,V) \geq \binom{t+9}{9} \geq \binom{12}{9} = 220 > 64 > 2 \sqrt{p-1}$ (for $t \geq 3$ and $p < 1000$), by Lemma \ref{l99}.
So $t = 2$. We claim that $H/C_{H}(V_{1})$ has center of order at most $(q-1)/8$. Otherwise $k(G) \geq k(H)/2 \geq ((q-1)/8 \cdot k(A_{5}))/2 \geq 2 \sqrt{p-1}$. But then $T$ has center of size at most $(q-1)/8$. But then going back to the place where we calculated orbits, we see that $n(T_{1},V_{1}) \geq 18$. So $n(G,V) \geq \binom{19}{2} = 171 > 64 > 2 \sqrt{p-1}$ (for $p < 1000$), by Lemma \ref{l99}.

So the only remaining cases are: $t \geq 2$, and $p = 59$, $61$, $71$, $79$, $89$, $91$, $101$, $109$, or $119$. If $p=59$ then it can happen that $n(T_{1},V_{1}) = 2$, so in this case we can only say that $t \leq 14$. In all other cases $n(T_{1},V_{1}) \geq 3$, so $t \leq 4$. 

Finally we consider (2/b) of Theorem \ref{orbit}. 

Let $T \leq GL(2,p)$ be as above. $T$ is solvable (and primitive) and $|T/Z(T)| \leq 24$. We may assume that $t \geq 5$. But then, by Lemma \ref{l99}, $n(G,V) \geq \binom{k+4}{5}$ where $k$ is the number of orbits of $T$ on the corresponding module of size $p^2$. Thus we can assume that $\binom{k+4}{5} \leq n(G,V) < 2 \sqrt{p-1}$. Since we may also assume that $k \geq 3$ by order considerations, from this previous inequality we get $113 \leq p$. But then, by Proposition \ref{Seager}, we have $k \geq p/24$. However, since $113 \leq p$, we also have $k \geq \max \{ 5, p/24 \}$. Just by using the bound $k \geq 5$ we can conclude that $p \geq 3970$. But then applying $k \geq p/24$ to the inequality above we get a contradiction.
\end{proof}

\section{Bounding $n(G,V)$ and $k(G)$}

We prove Theorem \ref{main1} by going through the five cases of Theorem \ref{main3}. 

Since both $k(G)$ and $n(G,V)$ are at least $2$, there is nothing to do in cases (1) and (2). Groups in cases (3) and (5) are solvable, so the argument of \cite{HK} applies. Let us assume then that case (4) of Theorem \ref{main3} is satisfied.   

Suppose first that $p=59$ and $t$ is an integer with $2 \leq t \leq 14$. In this case we need $k(G) + n(G,V) -1 \geq 16$. If the center of $T$ has size $58$ then $k(H) > 29 \cdot 5 = 145$ where $H$ is the stabilizer in $G$ of $V_{1}$. So $k(G) > 145/t$. But $n(G,V) \geq t+1$, so $k(G) + n(G,V) -1 > 145/t + t > 16$. (This also works for $t=1$.) So we know that $T$ has at least $3$ orbits on $V_{1}$ by Hering's theorem (see \cite[Chapter XII]{HB}). But then $n(G,V) \geq \binom{t+2}{2}$ by Lemma \ref{l99}. This is at least $16$ unless $t \leq 4$. 

Let $t=4$. Then $n(G,V) \geq 15$ by Lemma \ref{l99}. The result follows from $k(G) \geq 2$.  

In what follows let $C$ be the center of $H/C_{H}(V_{1})$.

Let $t=3$. Then $n(G,V) \geq 10$ by Lemma \ref{l99}. So we would need $k(G) \geq 7$. 
Then $k(G) \geq (k(A_{5}) |C|)/2t = (5/2)|C|/3$. If $|C| \geq 9$ then we are finished. Otherwise the center of $T$ has also at most $8$ elements. Then $n(T_{1},V_{1}) \geq 9$. So $n(G,V) \geq \binom{11}{3} > 22$ by Lemma \ref{l99}.
 
Let $t=2$. Then $n(G,V) \geq 6$ by Lemma \ref{l99}. So we would need $k(G) \geq 11$.  
Then $k(G) \geq (k(A_{5}) |C|)/2t = (5/2)|C|/2$. If $|C| \geq 9$ then we are finished. Otherwise the center of $T$ has also at most $8$ elements. Then $n(T_{1},V_{1}) \geq 9$. So $n(G,V)$ is at least $\binom{10}{2} = 45 > 11$ by Lemma \ref{l99}.

Let $q=61$ and $t=1$. Then $|C| \leq 6$. So $G$ has at least $11$ non-trivial orbits on $V$. So $n(G,V) + k(G) - 1 \geq 11 + k(G) \geq 16 > 2 \sqrt{60}$, a contradiction.

Suppose that $61 < p \leq 119$. Then $k \geq 3$ by Hering's theorem (see \cite[Chapter XII]{HB}). 

Let $t=4$. Then $n(G,V) \geq 15$ by Lemma \ref{l99}. So we would need $k(G) \geq 8$ (since $2\sqrt{118}$ is a bit smaller than $22$). 
Then $k(G) \geq (k(A_{5}) |C|)/2t = (5/2)|C|/4$. If $|C| \geq 13$ then we are finished. Otherwise the center of $T$ has also at most $12$ elements. Then $n(T_{1},V_{1}) \geq 7$. So $n(G,V) \geq \binom{10}{4} > 22$ by Lemma \ref{l99}.   

Let $t=3$. Then $n(G,V) \geq 10$ by Lemma \ref{l99}. So we would need $k(G) \geq 13$.  
Then $k(G) \geq (k(A_{5}) |C|)/2t = (5/2)|C|/3$. If $|C| \geq 16$ then we are finished. Otherwise the center of $T$ has also at most $15$ elements. Then $n(T_{1},V_{1}) \geq 6$. So $n(G,V)$ is at least $\binom{8}{3} = 56 > 22$ by Lemma \ref{l99}. 

Let $t=2$. Then $n(G,V) \geq 6$ by Lemma \ref{l99}. So we would need $k(G) \geq 17$. 
Then $k(G) \geq (k(A_{5}) |C|)/2t = (5/2)|C|/2$. If $|C| \geq 14$ then we are finished. Otherwise the center of $T$ has also at most $13$ elements. Then $n(T_{1},V_{1}) \geq 6$. So $n(G,V)$ is at least $\binom{7}{2} = 21$ by Lemma \ref{l99}. But $k(G) \geq 2$.

This proves Theorem \ref{main1} which in turn establishes Theorem \ref{main}.

\bigskip

{\bf Acknowledgement.} The author thanks Gunter Malle for helpful conversations on this topic and for a careful reading of an earlier version of this paper.


\begin{thebibliography}{99}

\bibitem{Br} R. Brauer, On groups whose order contains a prime number to the first power. I. \emph{Amer. J. Math.} \textbf{64}, (1942), 401--420.

\bibitem{Brauer} R. Brauer, Representations of finite groups. 1963 Lectures on Modern Mathematics, Vol. I  pp. 133--175, Wiley, New York.



\bibitem{Ernest} J. A. Ernest, Central intertwining numbers for representations of finite groups. \emph{Trans. Amer. Math. Soc.} \textbf{99} (1961), 499--508.

\bibitem{F} D. A. Foulser, Solvable primitive permutation groups of low rank. \emph{Trans. Amer. Math. Soc.} \textbf{143} (1969), 1--54. 

\bibitem{HP} Z. Halasi and K. Podoski, Every coprime linear group admits a base of size two, to appear in \emph{Trans. Amer. Math. Soc.}  

\bibitem{HHKM} L. H\'ethelyi, E. Horv\'ath, T. M. Keller, A. Mar\'oti,
Groups with few conjugacy classes. \emph{Proc. Edinb. Math. Soc.} (2) \textbf{54} (2011),  no. 2, 423--430. 

\bibitem{HK} L. H\'ethelyi and B. K\"ulshammer, On the number of conjugacy classes of a finite solvable group. \emph{Bull. London Math. Soc.}  \textbf{32} (2000), no. 6, 668--672.

\bibitem{HK1} L. H\'ethelyi and B. K\"ulshammer, On the number of conjugacy classes of a finite solvable group. II. \emph{J. Algebra} \textbf{270} (2003), no. 2, 660--669.

\bibitem{HM1} G. Hiss and G. Malle, Low-dimensional representations of quasi-simple groups. \emph{LMS J. Comput. Math.} \textbf{4} (2001), 22--63.

\bibitem{HM2} G. Hiss and G. Malle, Corrigenda: "Low-dimensional representations of quasi-simple groups'' [\emph{LMS J. Comput. Math.} \textbf{4} (2001), 22–--63]. \emph{LMS J. Comput. Math.} \textbf{5} (2002), 95--126 

\bibitem{Huppert} B. Huppert, {\it Endliche Gruppen. I.}
Die Grundlehren der Mathematischen Wissenschaften, Band 134 Springer-Verlag, Berlin-New York, 1967.

\bibitem{HB} B. Huppert and N. Blackburn, Finite groups. III. Grundlehren der Mathematischen Wissenschaften, 243.
Springer-Verlag, Berlin-New York, 1982.

\bibitem{K} T. M. Keller, Lower bounds for the number of conjugacy classes of finite groups. \emph{Math. Proc. Cambridge Philos. Soc.} \textbf{147} (2009), no. 3, 567--577.

\bibitem{Keller} T. M. Keller, Finite groups have even more conjugacy classes. \emph{Israel J. Math.} \textbf{181} (2011), 433--444.

\bibitem{KL} P. Kleidman and M. W. Liebeck, The subgroup structure of the finite classical groups. London Mathematical Society Lecture Note Series, 129. Cambridge University Press, Cambridge, 1990.

\bibitem{M} G. Malle, Fast-einfache Gruppen mit langen Bahnen in absolut irreduzibler Operation. \emph{J. Algebra} \textbf{300} (2006), no. 2, 655--672.

\bibitem{PP} P. P. P\'alfy and L. Pyber, Small groups of automorphisms. \emph{Bull. London Math. Soc.} \textbf{30} (1998), no. 4, 386--390. 

\bibitem{PS} C. E. Praeger and J. Saxl, On the orders of primitive permutation groups. \emph{Bull. London Math. Soc.} \textbf{12} (1980), no. 4, 303--307.

\bibitem{Pyber} L. Pyber, Finite groups have many conjugacy classes. \emph{J. London Math. Soc.} (2) \textbf{46} (1992), no. 2, 239--249.

\bibitem{VL} A. Vera L\'opez and J. Vera L\'opez,
Classification of finite groups according to the number of conjugacy classes I and II. 
\emph{Israel J. Math.} \textbf{51} (1985), no. 4, 305–--338 and \textbf{56} (1986), no. 2, 188--221. 

\bibitem{VS} A. Vera L\'opez and J. Sangroniz, The finite groups with thirteen and fourteen conjugacy classes. \emph{Math. Nachr.} \textbf{280} (2007), no. 5-6, 676--694.

\bibitem{kgvbook} P. Schmid, The solution of the $k(GV)$ problem. ICP Advanced
Texts in Mathematics, 4. Imperial College Press, London, 2007.

\bibitem{Seager} S. M. Seager, The rank of a finite primitive solvable permutation group. 
\emph{J. Algebra} \textbf{105} (1987), no. 2, 389--394. 

\bibitem{Suprunenko} D. A. Suprunenko, Matrix groups. Translations of Mathematical Monographs, Vol. 45. American Mathematical Society, Providence, R.I., 1976.

\end{thebibliography}
\end{document}